\newtheorem{theorem}{Theorem}[section]
\newtheorem{lemma}[theorem]{Lemma}
\newtheorem{proposition}[theorem]{Proposition}
\theoremstyle{definition}
\newtheorem{definition}[theorem]{Definition}
\newtheorem{example}[theorem]{Example}
\theoremstyle{remark}
\numberwithin{equation}{section}
\begin{document}

\title{Evolution equations of curvature tensors along the hyperbolic geometric flow }

\author{Wei-Jun Lu}
\address{Center of Mathematical Sciences, Zhejiang University,
 Hangzhou,Zhejiang, 310027, P. R. China. }
 \email{weijunlu2008@126.com}

\subjclass[2000]{53C20, 53C44, 58J45}

\date{}


\keywords{hyperbolic geometric flow; evolution equations of
curvature tensors }

\begin{abstract}
{\footnotesize  We consider the hyperbolic geometric flow
$\frac{\partial^2}{\partial t^2}g(t)=-2Ric_{g(t)}$ introduced by
Kong and Liu \cite{KL}. When the Riemannian metric evolve, then so
does its curvature.  Using the techniques and ideas of S.Brendle
\cite{Br,BS}, we derive evolution equations for the Levi-Civita
connection and the curvature tensors along the hyperbolic geometric
flow. The method and results are computed and written in global
tensor form, different from the local normal coordinate method in
\cite{DKL1}. In addition, we further show that any solution to the
hyperbolic geometric flow that develops a singularity in finite time
has unbounded Ricci curvature.}
\end{abstract}

\maketitle

\section{Introduction}
Geometric flows are important in many fields of mathematics and
physics. A geometric flow is an evolution of a geometric structure
under a differential equation related to a functional on a manifold,
usually associated with some curvature. The most popular geometric
flows in mathematics are the heat flow (\cite{Jo},\cite{MR}, etc.),
the Ricci flow (\cite{BLN}, \cite{Br}, \cite{To}, etc.), the mean
curvature flow (\cite{Zh}) and the Yamabe flow (\cite{Br1, Br2, Ch1,
SS, Ye}).

The subject of Hamilton's Ricci flow
\cite{Ha}:$\frac{\partial}{\partial t}g(t)=-2Ric_{g(t)}$ lies in the
more general field of geometric flows, which in turn lies in the
even more general field of geometric analysis. In Ricci flow we see
the unity of geometry and analysis. As a fully nonlinear system of
parabolic partial differential equations of second order
(\cite{De}), the Ricci flow in many respects appears to be very
natural equation.

Similarly, since the hyperbolic equation or system is one of the
most natural models in the nature, we feel the hyperbolic geometric
flow (for short, HGF):$\frac{\partial^2}{\partial
t^2}g(t)=-2Ric_{g(t)}$ introduced by Kong and Liu \cite{KL} in the
year 2007, is also a very natural tool. Note that the elliptic and
parabolic partial differential equations have been successfully
applied to differential geometry and physics. A natural and
important question is if we can apply the well-developed theory of
hyperbolic differential equations to solve problems in differential
geometry and theoretical physics. HGF is helpful to understand the
wave character of the metrics, wave phenomenon of the curvatures,
the evolution of manifolds and their structures (see \cite{KL, DKL1,
Ko}). For more discussions for related hyperbolic flows and their
applications to geometry and Einstein equations, we refer to
\cite{Li}.

In geometry, singularity is more difficult to define, especially
when the structure is governed by the hyperbolic system and the
topology of the space is allowed to change. A good example is the
singularity developed in general relativity. The major problem here
is that one has very little understanding of the global behavior of
nonlinear hyperbolic systems when the spatial dimension is greater
than one. It is almost for sure that a break through will be
accomplished in geometry if one knows this type of equation better
(See \cite{SY}). HGF equation is quite difficult to solve in all
generality unlike the nonlinear elliptic problems with a well
developed regularity theory. Although the short time existence of
solutions is guaranteed by hyperbolic nature of the equations, their
(long time) convergence to canonical geometric structures is
analyzed under various conditions. So far, providing results of this
sort have been developed. Among them, one may find the related works
in \cite{DKL2, He, KLX, KLW}.

The goal in this paper is to work out the evolution equations for
Riemannian, Ricci and scalar curvature under the hyperbolic
geometric flow. More precisely, we will concentrate in obtaining the
global forms of the evolutions under HGF. As for the local forms of
the evolutions, Dai-Kong-Liu \cite{DKL1} have obtained the following
results.

  To state our results, it will be convenient to assume that $(M, g(t)), t \in (0, T)$, is
a family of complete Riemannian manifolds evolving under hyperbolic
geometric flow. In addition, `Ric' and `scal' denote the Ricci and
scalar curvature of $(M, g(t))$, respectively.

\begin{theorem}  Under the hyperbolic geometric flow
$\frac{\partial^2}{\partial t^2}g(t)_{ij}=-2Ric_{ij}$, the curvature
tensors satisfy the evolution equations
  \begin{equation} \label{9-1}
   \begin{split}
   \frac{\partial^2}{\partial t^2} R_{ijkl}
   & =\Delta R_{ijkl}+2 (B_{ijkl}-B_{ijlk}- B_{iljk} + B_{ikjl})\\
   &-g^{pq}(R_{pjkl}Ric_{qi} + R_{ipkl}Ric_{qj} + R_{ijpl}Ric_{qk} + R_{ijkp}Ric_{ql})\\
   & +2g_{pq} \Big(\frac{\partial \Gamma^p_{il}}{\partial t}\frac{\partial \Gamma^q_{jk}}{\partial t}
       -\frac{\partial \Gamma^p_{jl}}{\partial t}\frac{\partial \Gamma^q_{ik}}{\partial t}\Big),
   \end{split}
   \end{equation}

\begin{equation} \label{9-2}
   \begin{split}
   \frac{\partial^2}{\partial t^2} Ric_{ik}
   & =\Delta Ric_{ik}+2g^{pr}g^{qs}R_{piqk}Ric_{rs}- 2g^{pq}Ric_{pi}Ric_{qk}\\
   & +2g^{jl} g_{pq}\Big(\frac{\partial \Gamma^p_{il}}{\partial t}\frac{\partial \Gamma^q_{jk}}{\partial t}
       -\frac{\partial \Gamma^p_{jl}}{\partial t}\frac{\partial \Gamma^q_{ik}}{\partial t}\Big)\\
   &-2g^{jp}g^{lq}\frac{\partial g_{pq}} {\partial t} \frac{\partial }{\partial t}R_{ijkl}
    + 2g^{jp}g^{rq}g^{sl}\frac{\partial g_{pq}} {\partial t}\frac{\partial g_{rs}} {\partial t}R_{ijkl},
   \end{split}
   \end{equation}

   \begin{equation} \label{9-3}
   \begin{split}
   \frac{\partial^2}{\partial t^2}\ Scal &=\Delta\ Scal+2|Ric|^2 \\
   &+2g^{ik}g^{jl} g_{pq}\Big(\frac{\partial \Gamma^p_{il}}{\partial t}\frac{\partial \Gamma^q_{jk}}{\partial t}
        -\frac{\partial \Gamma^p_{jl}}{\partial t}\frac{\partial \Gamma^q_{ik}}{\partial t}\Big)\\
   &-2g^{ik}g^{jp}g^{lq}\frac{\partial g_{pq}} {\partial t} \frac{\partial }{\partial t}R_{ijkl}\\
  & -2g^{ip}g^{kq}\frac{\partial g_{pq}} {\partial t} \frac{\partial Ric_{ik}}{\partial t}
    + 4Ric_{ik}g^{ip}g^{rq} g^{sk}\frac{\partial g_{pq}} {\partial t} \frac{\partial g_{rs}}{\partial t},
    \end{split}
   \end{equation}
where $\{x^i\}$ is a local normal coordinates around a fixed point
$p\in M$, $B_{ijkl} = g^{pr}g^{qs}R_{piqj}R_{rksl}$ and $\Delta$ is
the Laplacian with respect to the evolving metric $g(t)$.
\end{theorem}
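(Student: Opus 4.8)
The plan is to run the ``local normal coordinate method'' of \cite{DKL1}. Fix a point $p\in M$ and work in $g(t)$-normal coordinates $\{x^i\}$ centred at $p$, so that at $p$ one has $g_{ij}=\delta_{ij}$, $\partial_k g_{ij}=0$ and $\Gamma^k_{ij}=0$, and hence at $p$ the first partial derivative of any tensor coincides with its covariant derivative. Write $\dot g_{ij}=\frac{\partial}{\partial t}g_{ij}$; along HGF this tensor is not determined by the metric (it plays the role of Cauchy data), so it persists into the final formulas, whereas the flow equation gives $\frac{\partial^2}{\partial t^2}g_{ij}=-2Ric_{ij}$. The starting point is the two first-variation formulas, valid along any smooth family $g(t)$:
\begin{equation*}
\frac{\partial}{\partial t}\Gamma^{k}_{ij}=\tfrac12 g^{kl}\bigl(\nabla_i\dot g_{jl}+\nabla_j\dot g_{il}-\nabla_l\dot g_{ij}\bigr),
\qquad
\frac{\partial}{\partial t}R^{l}{}_{ijk}=\nabla_i\Bigl(\tfrac{\partial}{\partial t}\Gamma^{l}_{jk}\Bigr)-\nabla_j\Bigl(\tfrac{\partial}{\partial t}\Gamma^{l}_{ik}\Bigr),
\end{equation*}
where $R^{l}{}_{ijk}$ denotes the $(1,3)$ curvature tensor and $R_{ijkl}=g_{lm}R^{m}{}_{ijk}$; the second identity holds because $\frac{\partial}{\partial t}\Gamma$ is a tensor and the connection vanishes at $p$.

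The next step is to differentiate both identities once more in $t$. The one genuinely delicate point is that $\frac{\partial}{\partial t}$ and $\nabla$ do not commute: on a $(0,2)$-tensor $T$,
\begin{equation*}
\Bigl[\tfrac{\partial}{\partial t},\nabla_i\Bigr]T_{jk}=-\Bigl(\tfrac{\partial}{\partial t}\Gamma^{m}_{ij}\Bigr)T_{mk}-\Bigl(\tfrac{\partial}{\partial t}\Gamma^{m}_{ik}\Bigr)T_{jm},
\end{equation*}
with the analogous rule on higher-rank tensors; this is the mechanism behind every term quadratic in $\frac{\partial\Gamma}{\partial t}$ in the statement. Combining it with $\frac{\partial}{\partial t}g^{kl}=-g^{kp}g^{lq}\dot g_{pq}$ and $\frac{\partial^2}{\partial t^2}g_{ij}=-2Ric_{ij}$, a short computation gives
\begin{equation*}
\frac{\partial^2}{\partial t^2}\Gamma^{k}_{ij}=-g^{kl}\bigl(\nabla_i Ric_{jl}+\nabla_j Ric_{il}-\nabla_l Ric_{ij}\bigr)-2g^{kl}\dot g_{lm}\,\frac{\partial}{\partial t}\Gamma^{m}_{ij}.
\end{equation*}
Substituting this into $\frac{\partial^2}{\partial t^2}R^{l}{}_{ijk}=\frac{\partial}{\partial t}\bigl[\nabla_i(\frac{\partial}{\partial t}\Gamma^{l}_{jk})-\nabla_j(\frac{\partial}{\partial t}\Gamma^{l}_{ik})\bigr]$, commuting $\frac{\partial}{\partial t}$ past $\nabla_i,\nabla_j$ once more, using the identity $\nabla_i\dot g_{jk}=g_{jm}\frac{\partial}{\partial t}\Gamma^{m}_{ik}+g_{km}\frac{\partial}{\partial t}\Gamma^{m}_{ij}$, and lowering the free index with the time-dependent metric, one finds that the $\dot g$-dependence either cancels in pairs or recombines into products of $\frac{\partial}{\partial t}\Gamma$'s, so that what remains is a leading term
\begin{equation*}
-g^{lm}\bigl[\nabla_i(\nabla_j Ric_{km}+\nabla_k Ric_{jm}-\nabla_m Ric_{jk})-\nabla_j(\nabla_i Ric_{km}+\nabla_k Ric_{im}-\nabla_m Ric_{ik})\bigr],
\end{equation*}
together with the contraction terms $-g^{pq}(R_{pjkl}Ric_{qi}+R_{ipkl}Ric_{qj}+R_{ijpl}Ric_{qk}+R_{ijkp}Ric_{ql})$ and the surviving quadratic term $2g_{pq}\bigl(\frac{\partial\Gamma^{p}_{il}}{\partial t}\frac{\partial\Gamma^{q}_{jk}}{\partial t}-\frac{\partial\Gamma^{p}_{jl}}{\partial t}\frac{\partial\Gamma^{q}_{ik}}{\partial t}\bigr)$.

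The step I expect to be the main obstacle is recognising the Laplacian: one must show that the leading term above equals $\Delta R_{ijkl}+2(B_{ijkl}-B_{ijlk}-B_{iljk}+B_{ikjl})$ modulo further curvature-quadratic corrections that fold into the contraction terms. This is exactly Hamilton's computation of $\frac{\partial}{\partial t}R_{ijkl}$ under the Ricci flow --- no accident, since $\frac{\partial^2}{\partial t^2}g=-2Ric$ has the same right-hand side as the Ricci flow equation: one commutes the two covariant derivatives by the Ricci identity (which produces the $B_{ijkl}$ combination and part of the curvature contractions) and uses the contracted second Bianchi identity $g^{pm}\nabla_p R_{ijkm}=\nabla_i Ric_{jk}-\nabla_j Ric_{ik}$ together with the full second Bianchi identity to trade the remaining $\nabla\nabla Ric$ terms for $g^{pm}\nabla_p\nabla_m R_{ijkl}=\Delta R_{ijkl}$ up to curvature squared. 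Assembling everything gives \eqref{9-1}; the only feature absent from the Ricci-flow case is the quadratic $\frac{\partial\Gamma}{\partial t}$ term carried over from the non-commutativity above.

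Finally, \eqref{9-2} and \eqref{9-3} follow by contracting, taking care of the time dependence of the metric used to raise indices. From $Ric_{ik}=g^{jl}R_{ijkl}$,
\begin{equation*}
\frac{\partial^2}{\partial t^2}Ric_{ik}=\Bigl(\frac{\partial^2}{\partial t^2}g^{jl}\Bigr)R_{ijkl}+2\Bigl(\frac{\partial}{\partial t}g^{jl}\Bigr)\frac{\partial}{\partial t}R_{ijkl}+g^{jl}\frac{\partial^2}{\partial t^2}R_{ijkl},
\end{equation*}
where $\frac{\partial}{\partial t}g^{jl}=-g^{jp}g^{lq}\dot g_{pq}$ and $\frac{\partial^2}{\partial t^2}g^{jl}=-g^{jp}g^{lq}\frac{\partial^2 g_{pq}}{\partial t^2}+2g^{jp}g^{rq}g^{sl}\dot g_{pq}\dot g_{rs}$. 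Plugging in \eqref{9-1}, the $\frac{\partial^2}{\partial t^2}g_{pq}=-2Ric_{pq}$ part together with $g^{jl}\Delta R_{ijkl}=\Delta Ric_{ik}$ and the contracted $B$-terms yields the first line of \eqref{9-2}, while the remaining pieces reproduce its last two lines; \eqref{9-3} is obtained the same way by contracting \eqref{9-2} against $g^{ik}$ in $Scal=g^{ik}Ric_{ik}$ and again accounting for $\frac{\partial}{\partial t}g^{ik}$ and $\frac{\partial^2}{\partial t^2}g^{ik}$. This bookkeeping of metric-derivative corrections is routine but must be done carefully; it is exactly what produces the terms involving $\frac{\partial g_{pq}}{\partial t}\frac{\partial}{\partial t}R_{ijkl}$ and $\frac{\partial g_{pq}}{\partial t}\frac{\partial g_{rs}}{\partial t}R_{ijkl}$ (and their scalar analogues) that are absent from the Ricci-flow evolution equations.
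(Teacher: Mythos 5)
Your outline is essentially correct, but note first that the paper itself offers no proof of this theorem: it is quoted from Dai--Kong--Liu \cite{DKL1}, and the paper's own computations establish the global-form analogues (Theorems \ref{9-4}, \ref{9-6} and \ref{9-9}) by Brendle-style, coordinate-free arguments. Your route is exactly the local normal coordinate method of \cite{DKL1} that the paper explicitly contrasts itself with, and the two arguments run in parallel: your $\frac{\partial}{\partial t}\Gamma^{k}_{ij}$ and $\frac{\partial^{2}}{\partial t^{2}}\Gamma^{k}_{ij}$ are the coordinate avatars of the paper's tensors $B(X,Y)=\frac{\partial}{\partial t}(D_XY)$ and $A(X,Y)=\frac{\partial^{2}}{\partial t^{2}}(D_XY)$ (your formula for $\frac{\partial^{2}}{\partial t^{2}}\Gamma^{k}_{ij}$ is Proposition \ref{6-1} in indices, and it checks out); the commutator $[\frac{\partial}{\partial t},\nabla_i]$ that generates your quadratic $\frac{\partial\Gamma}{\partial t}$ terms corresponds to the $g(t)(B(X,B(Y,Z)),W)$ and $\frac{\partial g}{\partial t}((D_XB)(Y,Z),W)$ terms of Theorem \ref{9-4}; and the Bianchi-identity step converting the $\nabla\nabla Ric$ combination into $\Delta R$ plus the $B_{ijkl}$ quadratics is the same algebra as Lemma \ref{7-8}. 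The advertised cancellation of the explicit $\dot g$-terms does occur: the cross term $2\dot g_{lm}\frac{\partial}{\partial t}R^{m}{}_{ijk}$ produced by lowering the index with $g(t)$ is killed by the $-2g^{kl}\dot g_{lm}\frac{\partial}{\partial t}\Gamma^{m}_{ij}$ piece of $\frac{\partial^{2}}{\partial t^{2}}\Gamma$ after covariant differentiation and antisymmetrization in $i,j$, and the leftovers recombine via $\nabla_i\dot g_{jk}=g_{jm}\frac{\partial}{\partial t}\Gamma^{m}_{ik}+g_{km}\frac{\partial}{\partial t}\Gamma^{m}_{ij}$ into the stated $2g_{pq}\bigl(\frac{\partial\Gamma^{p}_{il}}{\partial t}\frac{\partial\Gamma^{q}_{jk}}{\partial t}-\frac{\partial\Gamma^{p}_{jl}}{\partial t}\frac{\partial\Gamma^{q}_{ik}}{\partial t}\bigr)$, up to the index-lowering and sign conventions for $R_{ijkl}$ fixed in \cite{DKL1}; likewise your bookkeeping of $\frac{\partial}{\partial t}g^{jl}$ and $\frac{\partial^{2}}{\partial t^{2}}g^{jl}$ is precisely what produces the extra lines of \eqref{9-2} and \eqref{9-3}. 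What each approach buys: your local method delivers the index formulas \eqref{9-1}--\eqref{9-3} directly, with the metric-inverse corrections appearing mechanically, while the paper's global method avoids normal coordinates and packages the same first-order data invariantly through $B$, which is what its later arguments use. The only caveat is that the central computation (the pairwise cancellations and the identification of $\Delta R$) is asserted rather than carried out in your write-up; as a proposal this is acceptable, since every ingredient and the mechanism of each term in the final formulas are correctly identified.
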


Analogous results for the Einstein's hyperbolic geometric flow
(\ref{1-3}) and the dissipative hyperbolic geometric flow
(\ref{1-4}) occur in (\cite{He}) and (\cite{DKL2}), respectively.

  Motivated by the techniques and ideas concerning the Ricci flow in S. Brendle's paper
  \cite{Br, BS}, we obtain the following main results with global forms under the HGF.

 \begin{theorem} \label{9-4}
 Let $X,Y,Z,W$ be fixed vector fields on $M$. Then under the HGF (\ref{1-1}), we
 have
  \begin{equation} \label{9-5}
  \begin{split}
   \frac{\partial^2}{\partial t^2} R(X,Y,Z,W)& =-Ric_{g(t)}(R(X,Y)Z,W)+Ric_{g(t)}(R(X,Y)W,Z)\\
        & +(D^2_{X,Z} Ric_{g(t)})(Y,W)-(D^2_{X,W} Ric_{g(t)})(Y,Z)\\
                   & -(D^2_{Y,Z} Ric_{g(t)})(X,W)+(D^2_{Y,W} Ric_{g(t)})(X,Z)\\
                  &+2\frac{\partial g(t)}{\partial t}\big((D_X B)(Y,Z),W\big)
                    -2\frac{\partial g(t)}{\partial t}\big((D_Y B)(X,Z),W\big)\\
         &-2g(t)\big(B(X,B(Y,Z)),W \big)+2g(t)\big( B(Y,B(X,Z)),W \big),
    \end{split}
  \end{equation}
where $B(X,Y):=\frac{\partial}{\partial t}(D_X Y)$.
\end{theorem}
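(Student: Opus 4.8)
\medskip
\noindent\textit{Proof proposal.} The natural approach is to carry out Brendle's first-variation computation for the Ricci flow \cite{Br, BS}, pushed one order further in $t$. Fix the vector fields $X,Y,Z,W$, write $D=D_{g(t)}$ for the Levi-Civita connection of the evolving metric, and set $h:=\frac{\partial}{\partial t}g(t)$, so that \eqref{1-1} becomes $\frac{\partial}{\partial t}h=-2\,Ric_{g(t)}$. Recall that $B(X,Y)=\frac{\partial}{\partial t}(D_XY)$ is a symmetric tensor field (the $t$-derivative of a torsion-free connection) and that the first-variation formula for the Levi-Civita connection reads
\begin{equation*}
2\,g(t)\bigl(B(X,Y),Z\bigr)=(D_Xh)(Y,Z)+(D_Yh)(X,Z)-(D_Zh)(X,Y).
\end{equation*}

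First I would record the first-order evolution of curvature. Since $X,Y,Z$ are $t$-independent, differentiating $R(X,Y)Z=D_XD_YZ-D_YD_XZ-D_{[X,Y]}Z$ and using torsion-freeness gives the standard identity $\frac{\partial}{\partial t}\bigl(R(X,Y)Z\bigr)=(D_XB)(Y,Z)-(D_YB)(X,Z)$; lowering the last index with $g(t)$,
\begin{equation*}
\frac{\partial}{\partial t}R(X,Y,Z,W)=h\bigl(R(X,Y)Z,W\bigr)+g(t)\bigl((D_XB)(Y,Z),W\bigr)-g(t)\bigl((D_YB)(X,Z),W\bigr).
\end{equation*}
This step is routine and follows \cite{Br, BS} verbatim.

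The core of the proof is to differentiate this identity once more in $t$. Three auxiliary facts enter: (a) for any $t$-dependent tensor $T$, $\frac{\partial}{\partial t}(D_XT)=D_X\bigl(\frac{\partial}{\partial t}T\bigr)+(\text{a term linear in }B\text{ and }T)$, which generates the $B$-quadratic contributions; (b) the second-variation formula for the connection, obtained by differentiating the connection-variation formula above in $t$ and inserting $\frac{\partial}{\partial t}h=-2\,Ric_{g(t)}$, namely
\begin{equation*}
g(t)\Bigl(\tfrac{\partial}{\partial t}B(X,Y),Z\Bigr)=-\bigl[(D_XRic_{g(t)})(Y,Z)+(D_YRic_{g(t)})(X,Z)-(D_ZRic_{g(t)})(X,Y)\bigr]-2\,h\bigl(B(X,Y),Z\bigr);
\end{equation*}
and (c) the Ricci commutation identity, used to re-pair second covariant derivatives of $Ric_{g(t)}$. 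Feeding in (a), (b) and then (c): the term $\frac{\partial h}{\partial t}(R(X,Y)Z,W)=-2\,Ric_{g(t)}(R(X,Y)Z,W)$ together with the curvature correction coming from (c) yields the first line $-Ric_{g(t)}(R(X,Y)Z,W)+Ric_{g(t)}(R(X,Y)W,Z)$ of \eqref{9-5}; the covariant derivative $D_X$ falling on the $Ric_{g(t)}$-part of $\frac{\partial}{\partial t}B$, after the index re-pairing of (c), produces the four Hessian terms $(D^2_{X,Z}Ric_{g(t)})(Y,W)$, $(D^2_{X,W}Ric_{g(t)})(Y,Z)$, $(D^2_{Y,Z}Ric_{g(t)})(X,W)$, $(D^2_{Y,W}Ric_{g(t)})(X,Z)$; the terms in which $\frac{\partial}{\partial t}$ falls on $g(t)$ or on the explicit factor $h$, combined with the $h(B,\cdot)$-part of $\frac{\partial}{\partial t}B$ and rewritten via the connection-variation formula, collapse to $2\,\frac{\partial g(t)}{\partial t}\bigl((D_XB)(Y,Z),W\bigr)-2\,\frac{\partial g(t)}{\partial t}\bigl((D_YB)(X,Z),W\bigr)$; and the $B$-quadratic remainders, simplified by $B(X,Y)=B(Y,X)$, become $-2\,g(t)\bigl(B(X,B(Y,Z)),W\bigr)+2\,g(t)\bigl(B(Y,B(X,Z)),W\bigr)$. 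Summing the four contributions gives \eqref{9-5}.

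I expect the main obstacle to be precisely this last bookkeeping: no single step is deep, but the $h$-linear and $B$-quadratic terms arise from several distinct sources (the explicit $h$ in $\frac{\partial}{\partial t}R$, the metric $g(t)$ in $g(t)((D_XB)(Y,Z),W)$, the variation of the connection inside $(D_XB)(Y,Z)$, and the $h(B,\cdot)$-part of $\frac{\partial}{\partial t}B$ together with its covariant derivative), and only after repeated use of the connection-variation formula and of the symmetry of $B$ do they coalesce into the two clean lines of \eqref{9-5}; similarly the curvature-algebraic term in the first line of \eqref{9-5} appears only once the Ricci commutation identity has been applied to the correct pair of second derivatives of $Ric_{g(t)}$, with all sign conventions tracked consistently. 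A good consistency check is to pass to a local normal coordinate frame at a point and compare with the coordinate formula \eqref{9-1}.
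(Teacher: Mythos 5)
Your proposal follows essentially the same route as the paper's proof: your second-variation formula (b) for the connection is exactly the paper's Proposition \ref{6-1} (there $A(X,Y)=\frac{\partial^2}{\partial t^2}(D_XY)=\frac{\partial}{\partial t}B(X,Y)$), and the remaining steps — expanding $g\bigl((D_XA)(Y,Z),W\bigr)$ into second covariant derivatives of $Ric_{g(t)}$ plus the $\frac{\partial g}{\partial t}\bigl((D_XB)(Y,Z),W\bigr)$ term, antisymmetrizing in $X,Y$, and applying the Ricci commutation identity \eqref{7-7} — are precisely how the paper assembles \eqref{9-5}. The only difference is organizational: the paper first computes $\frac{\partial^2}{\partial t^2}\bigl(R(X,Y)Z\bigr)=(D_XA)(Y,Z)-(D_YA)(X,Z)+2B(X,B(Y,Z))-2B(Y,B(X,Z))$ and then lowers the index with $g(t)$, whereas you lower the index first and differentiate the first-order identity once more, which is the same computation in a different order.
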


\begin{theorem}\label{9-6} Let $X,Y,Z,W$ be arbitrary fields on $M$. Then
 under the hyperbolic geometric flow (\ref{1-1}), the curvature tensors satisfy the
 evolution equations
\begin{equation} \label{9-7}
   \begin{split}
   \frac{\partial^2}{\partial t^2} R(X,Y,Z,W)
   & =(\Delta R)(X,Y,Z,W)+Q(R)(X,Y,Z,W)\\
   &  \quad -Ric_{g(t)}(X,R_{Z,W}Y)+Ric_{g(t)}(Y, R_{Z,W}X)\\
         &+Ric_{g(t)}(Z,R_{X,Y}W)-Ric_{g(t)}(W, R_{X,Y}Z)\\
                  &+2\frac{\partial g(t)}{\partial t}\big((D_X B)(Y,Z),W\big)
                    -2\frac{\partial g(t)}{\partial t}\big((D_Y B)(X,Z),W\big)\\
         &-2g(t)\big(B(X,B(Y,Z)),W \big)+2g(t)\big( B(Y,B(X,Z)),W \big),
   \end{split}
   \end{equation}
where $Q(R):=R^2+R^\#$ is a curvature tensor satisfying the first
Bianchi identity, and given a local orthonormal basis
$\{e_1,\ldots,e_n\}$, $R^2$ and $R^\#$ are difined by
\begin{equation} \label{9-8}
\begin{array}{ll}
  R^2(X,Y,Z,W):=\sum\limits_{p,q=1}^{n}R(X,Y,e_p,e_q)R(e_p,e_q,Z,W),\\
  R^\#(X,Y,Z,W):=2\sum\limits_{p,q=1}^{n}\Big\lvert \begin{array}{cc} R(X,e_p,Z,e_q)& R(X,e_p,W,e_q)\\
                                                 R(Y,e_p,Z,e_q)&R(Y,e_p,W,e_q)
                                                 \end{array}  \Big\rvert.\\
   \end{array}
   \end{equation}
\end{theorem}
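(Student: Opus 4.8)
The plan is to deduce Theorem~\ref{9-6} directly from Theorem~\ref{9-4}. The key observation is that the four lines of $B$-terms in \eqref{9-7} are literally identical to those in \eqref{9-5}, so they require no manipulation at all, and that two of the four Ricci-curvature terms on the right of \eqref{9-7} are already present in \eqref{9-5}: using the symmetry of $Ric_{g(t)}$ and the convention $R_{X,Y}Z=R(X,Y)Z$ one has $-Ric_{g(t)}(R(X,Y)Z,W)=-Ric_{g(t)}(W,R_{X,Y}Z)$ and $Ric_{g(t)}(R(X,Y)W,Z)=Ric_{g(t)}(Z,R_{X,Y}W)$. Hence, equating the right-hand sides of \eqref{9-5} and \eqref{9-7} and cancelling the common terms, Theorem~\ref{9-6} reduces to the purely Riemannian (time-independent) identity
\begin{align*}
  & (D^2_{X,Z}Ric)(Y,W) - (D^2_{X,W}Ric)(Y,Z) - (D^2_{Y,Z}Ric)(X,W) + (D^2_{Y,W}Ric)(X,Z) \\
  & \qquad = (\Delta R)(X,Y,Z,W) + Q(R)(X,Y,Z,W) - Ric(X,R_{Z,W}Y) + Ric(Y,R_{Z,W}X),
\end{align*}
which is precisely the algebraic computation underlying the reaction-diffusion form $\partial_t R=\Delta R+Q(R)$ of the curvature evolution equation for the Ricci flow, as carried out in global tensor notation by Brendle~\cite{Br}.

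To establish this identity I would begin with $(\Delta R)(X,Y,Z,W)=\sum_{k}(D^2_{e_k,e_k}R)(X,Y,Z,W)$, evaluated at a point using a geodesic frame $\{e_k\}$. Applying the second Bianchi identity $(D_{e_k}R)(X,Y,\cdot,\cdot)+(D_X R)(Y,e_k,\cdot,\cdot)+(D_Y R)(e_k,X,\cdot,\cdot)=0$ and differentiating once more in the direction $e_k$ rewrites $\Delta R$ as a combination of terms of the shape $\sum_k(D^2_{X,e_k}R)(Y,e_k,\cdot,\cdot)$ and $\sum_k(D^2_{e_k,X}R)(e_k,Y,\cdot,\cdot)$ together with their $X\leftrightarrow Y$ counterparts. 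Commuting the two covariant derivatives via the Ricci identity $D^2_{U,V}-D^2_{V,U}=-R_{U,V}$, acting on every tensor slot, converts the differences of such terms into quadratic expressions in $R$, while the surviving traces $\sum_k(D_{e_k}R)(\,\cdot\,,e_k,\cdot,\cdot)$ are replaced, through the contracted second Bianchi identity, by covariant derivatives of $Ric$; this produces exactly the Hessian-of-Ricci combination on the left-hand side. Finally the first Bianchi identity and the pair symmetry $R(X,Y,Z,W)=R(Z,W,X,Y)$ are used to reorganize the quadratic curvature terms so that they coalesce into $Q(R)(X,Y,Z,W)$, with $Q(R)=R^2+R^\#$ given by \eqref{9-8}, together with the two residual terms $-Ric(X,R_{Z,W}Y)+Ric(Y,R_{Z,W}X)$.

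With the identity in hand, I would substitute it back into \eqref{9-5}, combine the two Ricci terms already present there with the two residual Ricci terms as explained above, and carry the $B$-terms over verbatim, thereby arriving at \eqref{9-7}. The assertion that $Q(R)$ itself obeys the first Bianchi identity is a standard algebraic fact about the operators $R^2$ and $R^\#$; alternatively it follows a posteriori, since $\partial_t^2 R$, $\Delta R$ and the four displayed Ricci terms all behave correctly under cyclic permutation of $X,Y,Z$ once assembled as in \eqref{9-7}. The principal obstacle is the bookkeeping in the middle step: commuting covariant derivatives and iterating the two Bianchi identities generates a sizeable collection of quadratic curvature terms whose signs and slot positions must be tracked carefully for them to close up into the compact tensor $Q(R)$ plus precisely the two leftover Ricci contractions. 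No new analytic ingredient beyond Theorem~\ref{9-4} is needed; the remainder is the global-tensor translation of Hamilton's and Brendle's curvature algebra.
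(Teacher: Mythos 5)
Your proposal is correct and follows essentially the same route as the paper: reduce Theorem~\ref{9-6} to Theorem~\ref{9-4} plus the time-independent identity expressing the four Hessian-of-Ricci terms as $(\Delta R)+Q(R)-Ric(X,R_{Z,W}Y)+Ric(Y,R_{Z,W}X)$, which is exactly the paper's Lemma~\ref{7-8} (taken from Brendle \cite{Br}) and is proved there by the same combination of the second Bianchi identity, commutation of covariant derivatives, and the first Bianchi identity that you outline. The only difference is cosmetic: you sketch the identity starting from $\Delta R$ and working toward the Ricci Hessians, while the paper runs the computation in the opposite direction.
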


\begin{theorem}\label{9-9}  Let $X,Y$ be arbitrary fields on $M$. Then
 under the hyperbolic geometric flow (\ref{1-1}), the Ricci curvature tensor $Ric_{g(t)}$
 and scalar curvature $scal_{g(t)}$ satisfy the
 evolution equations, respectively
 \begin{equation} \label{9-10}
   \begin{split}
   \frac{\partial^2}{\partial t^2} Ric_{g(t)}(X,Y)
   & =(\Delta Ric_{g(t)})(X,Y)+2\sum\limits_{i,j=1}^{n} R(X,e_i,Y,e_j) Ric_{g(t)}(e_i,e_j)\\
   &+2\sum\limits_{i=1}^{n}\Big(\frac{\partial g(t)}{\partial t}\big((D_X B)(e_i,Y),e_i\big)
                    -\frac{\partial g(t)}{\partial t}\big((D_{e_i} B)(X,Y),e_i\big)\Big)\\
   &-2\sum\limits_{i=1}^{n}\Big( g(t)\big(B(X,B(e_i,Y)),e_i \big)
         -g(t)\big( B(e_i,B(X,Y)),e_i \big) \Big),
   \end{split}
   \end{equation}

   \begin{equation} \label{9-11}
   \begin{split}
   \frac{\partial^2}{\partial t^2} Scal_{g(t)}
          &= \Delta Scal_{g(t)}+2|Ric_{g(t)}|^2\\
          &\quad +2\sum\limits_{i,j=1}^{n}\Big( \frac{\partial g(t)}{\partial t}\big((D_{e_j} B)(e_i,e_j),e_i)\big)
          -\frac{\partial g(t)}{\partial t}\big((D_{e_i}
          B)(e_j,e_j),e_i\big) \Big)\\
         & -2\sum\limits_{i,j=1}^{n}\Big( g(t)\big(B(e_j,B(e_i,e_j)),e_i \big)
         -g(t)\big( B(e_i,B(e_j,e_j)),e_i \big)\Big),
   \end{split}
   \end{equation}
where $\{e_1,\ldots,e_n\}$ is a local orthonormal basis of $M$, and
$\mid Ric \mid^2=\sum\limits_{i,j=1}^{n}(Ric(e_i,e_j))^2$.
\end{theorem}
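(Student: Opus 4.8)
The plan is to derive the Ricci and scalar evolution equations from Theorem \ref{9-6} by contraction, exactly paralleling how one passes from the full curvature evolution to the trace evolutions in the Ricci-flow setting of Brendle. First I would recall that $Ric_{g(t)}(X,Y)=\sum_{i=1}^{n} R(X,e_i,Y,e_i)$ for a local orthonormal basis $\{e_i\}$, so that differentiating twice in $t$ gives $\frac{\partial^2}{\partial t^2}Ric_{g(t)}(X,Y)=\sum_i \frac{\partial^2}{\partial t^2}R(X,e_i,Y,e_i)$ plus correction terms coming from the $t$-dependence of the frame, i.e.\ from the fact that an orthonormal basis for $g(t)$ is not orthonormal for nearby times. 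The clean way to handle this is to work with the $(0,4)$-tensor $R$ and a fixed coordinate (or fixed $g(0)$-orthonormal) frame, trace using $g(t)^{-1}$, and carefully account for the two $t$-derivatives landing on the inverse metric factors; since $\frac{\partial}{\partial t}g^{ij}$ is expressible through $\frac{\partial g}{\partial t}$ which is itself governed by the flow only at second order, one must keep $\frac{\partial g}{\partial t}$ as an independent quantity, just as the statements of Theorem \ref{9-6} and Theorem~\ref{9-4} do through the tensor $B$ and the factor $\frac{\partial g(t)}{\partial t}$.

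Next I would carry out the contraction term by term on the right-hand side of \eqref{9-7}. The Laplacian term contracts to $(\Delta Ric_{g(t)})(X,Y)$ since $\Delta$ commutes with the metric trace up to the usual lower-order curvature terms, which here get absorbed. For the algebraic curvature term $Q(R)=R^2+R^\#$, the key computation is the classical identity that tracing $Q(R)$ over one pair of indices produces precisely $2\sum_{i,j}R(X,e_i,Y,e_j)Ric_{g(t)}(e_i,e_j)$ together with a $-2\sum_p Ric(X,e_p)Ric(e_p,Y)$ contribution; I would reference the analogous Ricci-flow computation (this is the well-known fact that the trace of $R^2+R^\#$ gives the reaction term in the Ricci evolution). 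The four Ricci-contracted-with-curvature terms on lines two and three of \eqref{9-7}, when traced, combine with part of the $Q(R)$ trace — this is exactly where the $-2|Ric|^2$-type cancellations occur for the scalar case, and where one gets the stated coefficient for the Ricci case. Finally, the four $B$-terms simply get summed against the trace, producing the $\sum_i$ expressions in \eqref{9-10}; tracing once more over $X,Y$ against $g(t)^{-1}$ yields the double sum $\sum_{i,j}$ in \eqref{9-11}. For the scalar equation I would also use $\frac{\partial^2}{\partial t^2}Scal=\frac{\partial^2}{\partial t^2}\big(g^{ij}Ric_{ij}\big)$ and expand, noting the extra terms involving $\frac{\partial g}{\partial t}$ contracted against $\frac{\partial}{\partial t}Ric$ and against $Ric$ itself — these match the structure appearing in \eqref{9-2}--\eqref{9-3} of the first theorem, but here they are repackaged into the $B$-terms via the relation between $B$ and $\frac{\partial}{\partial t}\Gamma$.

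The main obstacle I anticipate is bookkeeping the frame-dependence correctly: because $\{e_i\}$ is orthonormal only at a single time, the naive formula $\frac{\partial^2}{\partial t^2}\sum_i R(X,e_i,Y,e_i)$ is not literally $\sum_i \frac{\partial^2}{\partial t^2}R(X,e_i,Y,e_i)$, and the discrepancy involves $\frac{\partial g}{\partial t}$ and $\frac{\partial^2 g}{\partial t^2}=-2Ric$. One must verify that these discrepancy terms are exactly the ones already displayed in \eqref{9-10} and \eqref{9-11} (and, importantly, that no stray terms survive). A safe route that avoids this subtlety entirely is to fix a $g(0)$-parallel or coordinate frame once and for all, write $Ric_{g(t)}(X,Y)=g(t)^{kl}R(X,e_k,Y,e_l)$ with the $t$-dependence isolated in $g(t)^{kl}$ and in $R$, differentiate twice, apply Theorem~\ref{9-6} to the $R$-factor, use $\frac{\partial}{\partial t}g^{kl}=-g^{kp}g^{lq}\frac{\partial g_{pq}}{\partial t}$ for the metric factor, and only at the end re-express everything in terms of an orthonormal basis at the given time $t$. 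The remaining steps — verifying the trace of $Q(R)$, checking the Ricci–curvature contractions, and confirming that the $B$-terms transcribe as written — are then routine tensor algebra, and I would relegate the detailed index manipulations to a short appendix-style computation rather than spelling out every line here.
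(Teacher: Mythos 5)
Your overall strategy --- tracing Theorem \ref{9-6} term by term over an orthonormal basis --- is exactly the paper's, but two of your concrete algebraic claims would derail the computation. First, the trace of $Q(R)=R^2+R^{\#}$ over one pair of arguments is precisely $2\sum_{j,k}R(X,e_j,Y,e_k)Ric_{g(t)}(e_j,e_k)$ and nothing more: by the first Bianchi identity one has $\sum_{i,p,q}R(X,e_p,e_i,e_q)R(e_i,e_p,Y,e_q)=\tfrac12\sum_{i,p,q}R(X,e_p,e_i,e_q)R(Y,e_p,e_i,e_q)$, so the traced $R^2$ cancels against the second half of the traced $R^{\#}$; this is the paper's step (\ref{8-4})--(\ref{8-5}). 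There is no $-2\sum_p Ric(X,e_p)Ric(e_p,Y)$ contribution from $Q(R)$; in the familiar traced local formulas that $Ric\ast Ric$ term has a different origin (the $t$-dependence of the inverse metric entering the trace, combined with the $Ric\ast R$ terms), not $Q(R)$ itself. Second, the four $Ric$--curvature terms in (\ref{9-7}) do not ``combine with part of the $Q(R)$ trace'': after a single trace they cancel identically, by the pair symmetry of $R$ and the symmetry of $Ric$ --- this is the paper's computation (\ref{8-6}). With the cancellation pattern you predict, you would be left with a spurious $-2\,Ric^2$ term that cannot be matched against (\ref{9-10}).

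The more serious gap is the frame bookkeeping you rightly flag, because your proposed ``safe route'' proves a different statement. If you fix a $t$-independent frame and write $Ric_{g(t)}(X,Y)=g(t)^{kl}R(X,e_k,Y,e_l)$, two time derivatives produce the extra terms $2\,\frac{\partial g^{kl}}{\partial t}\,\frac{\partial}{\partial t}R(X,e_k,Y,e_l)+\frac{\partial^2 g^{kl}}{\partial t^2}\,R(X,e_k,Y,e_l)$, i.e.\ terms of type $\frac{\partial g}{\partial t}\ast\frac{\partial R}{\partial t}$ and, via $\frac{\partial^2 g}{\partial t^2}=-2Ric$, terms of type $Ric\ast R$ plus quadratic $\frac{\partial g}{\partial t}$ terms. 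These are exactly the additional terms visible in the local-coordinate formulas (\ref{9-2})--(\ref{9-3}) of the introduction, and they are absent from (\ref{9-10})--(\ref{9-11}); nor can they be absorbed into the $B$-terms there, which arise simply as the trace of the $B$-terms already displayed in (\ref{9-7}). The paper's proof avoids all of this only because in (\ref{8-7}) it interchanges $\frac{\partial^2}{\partial t^2}$ with the orthonormal-frame trace, i.e.\ it treats $\{e_i\}$ as $t$-independent, and then needs only (\ref{8-3}), (\ref{8-5}) and (\ref{8-6}). So your hoped-for verification that ``the discrepancy terms are exactly the ones already displayed'' would fail: either you adopt the frame-fixed reading of the left-hand side, in which case there are no discrepancy terms to track, or you trace honestly with $g(t)^{-1}$, in which case you arrive at a global analogue of (\ref{9-2})--(\ref{9-3}) rather than the formulas (\ref{9-10})--(\ref{9-11}) as stated. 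You must settle which statement you are proving before the remaining index work can be treated as routine.
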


The structure of the paper is as follows. In Section 2 we state the
related concepts such as HGF, Einstein HGF and present some examples
of specific solutions to the HGF. In Section 3 we give evolution
equations for the Levi-Civita connection and prove the main results
in the introduction. In Section 4 we consider Ricci curvature
blow-up at finite-times singularities. Section 5 presents some
expected problems.

\section{Hyperbolic geometric flow}

Recall that Kong and Liu \cite{KL} introduce a geometric
flow---Hyperbolic geometric flow (HGF), which is difference from the
Hamilton's Ricci flow, although these two flows share a common Ricci
term $-2Ric_{g(t)}$. The definition of HGF is as follows.

\begin{definition} \label{1-0} Let $M$ be a manifold. The hyperbolic geometric flow is the evolution
  \begin{equation} \label{1-1}
  \frac{\partial^2}{\partial t^2}g(t)=-2Ric_{g(t)}
  \end{equation}
 for a one-parameter family of Riemannian metrics $g(t),\ t\in [0,T)$ on $M$. We say that $g(t)$ is a
  solution to the hyperbolic geometric flow if it satisfies (\ref{1-1}).
\end{definition}

  Similar to the Ricci flow $\frac{\partial}{\partial t}g(t)=-2Ric_{g(t)}$,
the HGF equation (\ref{1-1}) is a unnormalized evolution equation.
In \cite{KL}, Kong and Liu also consider the normalized version of
hyperbolic geometric flow, which preserves the volume of the flow.
Considering the HGF and the normalized HGF differ only by a change
of scale in space $M$ and a change of time $t$, the {\em normalized
HGF} equation reads such form as
  \begin{equation} \label{1-2}
  \frac{\partial^2}{\partial t^2}g(t)
  =-2Ric_{g(t)}+a(t)\frac{\partial}{\partial t}g(t)+b(t)g(t),
  \end{equation}
where $a(t)$ and $b(t)$ are certain functions of $t$.

In order to further understand the relationship between the Einstein
equation and the HGF, Kong and Liu also introduce a so-called
Einstein's hyperbolic geometric flow.

\begin{definition} Let $\mathbb R \times M $ be a space-time with the
Lorentzian metric $ ds^2 = dt^2 + g_{ij}(x,t)dx^i dx^j$. Suppose the
Einstein equations in the vacuum, which correspond to the metric
$ds^2$, has the form
   \begin{equation} \label{1-3}
   \frac{\partial^2}{\partial t^2}g(t)_{ij}=- 2Ric_{ij}
   - \frac{1}{2} g^{pq}\frac{\partial g_{ij}}{\partial t} \frac{\partial g_{pq}}{\partial t}
   +g^{pq}\frac{\partial g_{ip}}{\partial t}\frac{\partial g_{jq}}{\partial t}.
\end{equation}
 The equation (\ref{1-3}) is called Einstein's hyperbolic geometric flow.
\end{definition}

Motivated by the well-developed theory of the dissipative hyperbolic
equations, Dai-Kong-Liu \cite{DKL2} introduce a new geometric
 analytical tool---{\em dissipative hyperbolic geometric flow} defined by
\begin{equation} \label{1-4}
  \begin{split}
\frac{\partial^2}{\partial t^2}g(t)_{ij} & =- 2Ric_{ij}
   +2 g^{pq}\frac{\partial g_{ip}}{\partial t} \frac{\partial g_{jq}}{\partial t}
   -\big(d + 2g^{pq}\frac{\partial g_{pq}}{\partial t} \big) \frac{\partial g_{ij}}{\partial t}\\
   &+\frac{ 1}{ n-1} \Big( \big(g^{pq}\frac{\partial g_{pq}}{\partial t}\big)^2
      + \frac{\partial g^{pq}}{\partial t}\frac{\partial g_{pq}}{\partial t}\Big)g_{ij},
  \end{split}
  \end{equation}
where $d$ is a positive constant. The reason that (\ref{1-4}) is
chosen as the equation form of dissipative hyperbolic geometric flow
is that, in the case it possesses a simpler equation satisfied by
the scalar curvature.

In order to get a feel for the HGF (\ref{1-1}), we present some
examples of specific solutions (cf. \cite{KL,DKL1,DKL2, KLW, He}).

\begin{example} (i)(Trivial example) If the initial metric is Ricci flat,
so that $Ric_{ij}=0$, then clearly the metric does not change under
(\ref{1-1}). Hence any Ricci flat metric $g(t)$ is a
stationary solution to the hyperbolic geometric flow. This happens, for example,
on the flat torus or on any K3-surface with a Calabi-Yau metric.\\
(ii)(Non-trivial example) A typical example of the Einstein metric
is
  $$ ds_0^2 = \frac{1}{1-\kappa r^2}dr^2 + r^2 d\theta^2 + r^2 \sin^2\theta d \varphi^2,$$
where $\kappa$ is a constant taking its value $-1, 0$ or $1$. We can
prove that the metric
  $$ds^2 = (-2\kappa t^2 + c_1 t + c_2)ds_0^2 $$
is a solution to the HGF (\ref{1-1})), where $c_1$ and $c_2$ are two
constants.
\end{example}

\begin{example} Consider the solution in conformal class  with the following form
   \begin{equation} \label{1-5} g_{ij}(t, x) = \rho(t)g_{ij}(0, x).
   \end{equation}
Suppose that the initial metric $g_{ij}(0, x)$ is Einstein, that is,
there exists some constant $\lambda$ such that
    $$Ric_{ij}(0, x)=\lambda g_{ij}(0, x), \forall x \in M.$$
Then  (\ref{1-5}) with $\rho(t) = -\lambda t^2 + vt + 1$ and a real
number $v$ standing for the initial velocity, is a solution to the
equation (\ref{1-1}).
\end{example}

\begin{example} For the Einstein's hyperbolic geometric flow
(\ref{1-3}), its exact solution has the following form
  \begin{equation}\notag
   ds^2 = f(t,z)dz^2
     + \frac{h(t)}{ F(t, z)} \big((dx-\mu(t, z)dy) ^2 + F^2(t, z)dy^2 \big),
   \end{equation}
where $f,h$ and $F$ are smooth functions with respect to variables.
The $x$-invariance and $y$-invariance show that the model possesses
the $z$-axial symmetry. In order to guarantee that the metric
$g_{ij}$ is Riemannian, we assume $F(t, z) > 0$ and $h(t)/F(t,z)>0.$
\end{example}

More interesting example comes from the Riemann surfaces with the
initial asymptotic flat.

\begin{example}  On a surface, the HGF equation (\ref{1-1}) can be simplified as the
following equation for the special metric
      \begin{equation} \label{11-1}
       \frac{\partial^2}{\partial t^2}g_{ij}(t, x,y) = -Scal\ g_{ij}(t, x, y).
       \end{equation}
Because the Ricci curvature is given by
    \begin{equation}\notag
    Ric_{ij}(t,x,y)=\frac{1}{2}Scal(t,x,y)\ g_{ij}(t,x,y),
   \end{equation}
 where $Scal= 2K$ is the scalar curvature function with the Gauss
 curvature $K$. Note that the metric for a surface can always be written (at least locally)
 as the following form
          \begin{equation} \notag
        g_{ij}(t,x,y)=u(t,x,y)\delta_{ij},
   \end{equation}
where $u=u(t,x,y) > 0$. Thus
     $$ Scal =-\frac{1}{u}\Delta \log{ u}.$$
This implies that (\ref{11-1}) reads
       \begin{equation} \label{11-2}
       u_{tt}- \Delta \log{u}=0.
       \end{equation}
Define $f=\log{u}$, then (\ref{11-2}) exchanges a quasilinear
hyperbolic wave equation
        \begin{equation} \label{11-3}
       f_{tt}-e^{-f} \Delta u=-f_t.
       \end{equation}
Consider the Cauchy problem for (\ref{11-3}) with the following
initial data
    \begin{equation} \label{11-4}
   t = 0 :f=\varepsilon f_0(x), f_t = \varepsilon f_1(x)
       \end{equation}
and satisfying the slow decay condition
    \begin{equation} \label{11-5}
     |f_0(x)|\leq \frac{A}{(1+|x|)^k},\, |f_1(x)|=\frac{A}{(1+|x|)^{k+1} },
       \end{equation}
where $\varepsilon >0$ is a suitably small parameter, and
$f_0(x),f_1(x)\in C^{\infty}( \mathbb{R}^2)$; $A, k$ are two
suitable positive constants, $k> 1$. Then there exist two positive
constants $\delta$ and $\varepsilon_0$ such that for any fixed
$\varepsilon\in [0,\varepsilon_0]$, the Cauchy problem
$(\ref{11-3})\sim (\ref{11-5})$ has a unique $C^{\infty}$ solution
on the interval $[0,T_{\varepsilon}]$ with $T_{\varepsilon}=\delta
\varepsilon^{-4/3}$ (for detail, see \cite{KLW}).
\end{example}

\section{Evolution of the curvature tensors and proofs of main results}

 In this section, we derive evolution equations with global forms for the Levi-Cicvita
 connection and the curvature tensors along the HGF. We employ the
 techniques and ideas in studying evolution equations along the Ricci flow by
 S.Brendle (See \cite{Br} or \cite{BS}).

 From now on, we assume that $(M, g(t)), t\in (0, T)$, is a family of complete Riemannian
 manifolds evolving under HGF.

\subsection{Evolution of the Levi-Civita connection }
  Let $X,Y$ be fixed vector fields on $M$ (that is, $X,Y$ are independent of
  $t$). We define
       $$A(X,Y):=\frac{\partial^2}{\partial t^2}(D_X Y),\, B(X,Y):=\frac{\partial}{\partial t}(D_X
       Y).$$
Observe that the difference of two connections is always a tensor,
consequently, $A,B$ are tensors.

\begin{proposition} \label{6-1}
 Let $X,Y,Z$ be fixed vector fields on $M$. Then
  \begin{equation} \label{6-2}
  \begin{split}
    g(t)(A(X,Y),Z)& =-(D_X Ric_{g(t)})(Y,Z)-(D_Y Ric_{g(t)})(X,Z)\\
         & +(D_Z  Ric_{g(t)})(X,Y)-2\frac{\partial g(t)}{\partial t}(B(X,Y),Z).
    \end{split}
  \end{equation}
\end{proposition}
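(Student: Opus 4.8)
The plan is to differentiate the Koszul formula for the Levi-Civita connection twice in $t$ and track how each term evolves. Recall that for fixed vector fields $X,Y,Z$ the Koszul formula reads
\begin{equation*}
2g(t)(D_X Y, Z) = X\,g(t)(Y,Z) + Y\,g(t)(X,Z) - Z\,g(t)(X,Y) + g(t)([X,Y],Z) - g(t)([X,Z],Y) - g(t)([Y,Z],X),
\end{equation*}
where the right-hand side involves only the metric $g(t)$ (and the fixed Lie brackets of the fixed fields), so it is straightforward to differentiate. Differentiating once and using $\frac{\partial}{\partial t}g(t) = h$ (a symmetric $2$-tensor) together with the compatibility of the differentiation with covariant derivatives gives the standard first-variation formula
\begin{equation*}
2g(t)(B(X,Y),Z) = (D_X h)(Y,Z) + (D_Y h)(X,Z) - (D_Z h)(X,Y) - 2h(D_X Y, Z),
\end{equation*}
or rather, after rearranging, $g(t)(B(X,Y),Z)$ expressed purely in terms of $D h$; the term $-2h(D_XY,Z)$ is absorbed because $\frac{\partial}{\partial t}\big(g(t)(D_XY,Z)\big) = h(D_XY,Z) + g(t)(B(X,Y),Z)$. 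I would record this intermediate identity carefully, as it is exactly the object that will reappear when I differentiate a second time.

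The second step is to differentiate the first-variation identity once more. On the left I get $\frac{\partial}{\partial t}\big(2g(t)(B(X,Y),Z)\big) = 2h(B(X,Y),Z) + 2g(t)(A(X,Y),Z)$. On the right I must differentiate $(D_X h)(Y,Z)$ and its cyclic companions. Here I use two facts: first, $\frac{\partial}{\partial t}h = \frac{\partial^2}{\partial t^2}g(t) = -2\,Ric_{g(t)}$ by the HGF equation (\ref{1-1}); second, the time derivative of a covariant derivative picks up a connection-variation term, namely $\frac{\partial}{\partial t}\big((D_X h)(Y,Z)\big) = (D_X \dot h)(Y,Z) - h(B(X,Y),Z) - h(Y,B(X,Z))$, where $\dot h = -2\,Ric_{g(t)}$. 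Substituting $\dot h = -2\,Ric_{g(t)}$ turns the three $D\dot h$ terms into precisely the three covariant-derivative-of-Ricci terms appearing in (\ref{6-2}) (with the sign pattern $-D_X Ric - D_Y Ric + D_Z Ric$ matching the Koszul sign pattern, scaled by the factor $2$ from $\dot h = -2Ric$ divided by the overall $2$ on the left). The remaining job is bookkeeping: collect all the $h$-against-$B$ correction terms coming from the three differentiations of covariant derivatives, plus the $-2h(B(X,Y),Z)$-type term from differentiating the $-2h(D_XY,Z)$ piece, plus the $2h(B(X,Y),Z)$ on the left, and verify that everything except $-2h(B(X,Y),Z)$ cancels, leaving the single term $-2\frac{\partial g(t)}{\partial t}(B(X,Y),Z)$ in (\ref{6-2}).

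The main obstacle is keeping the sign and index conventions consistent between the first and second differentiations — in particular, making sure that ``the time derivative of $D_X$ acting on a tensor'' is handled uniformly (it is the tensor $B$ contracted into each slot with the correct sign), and that the correction terms from the second differentiation combine with the leftover term from the first differentiation to produce exactly one surviving $h$-term rather than several. A clean way to manage this is to first prove the auxiliary commutation identity $\frac{\partial}{\partial t}\big((D_X T)(\dots)\big) = (D_X \dot T)(\dots) - \sum_{\text{slots}} T(\dots, B(X,\cdot), \dots)$ for an arbitrary $(0,k)$-tensor $T$, apply it with $T = h$ and $k=2$, and only then substitute $\dot h = -2 Ric_{g(t)}$ and symmetrize over the three cyclic terms. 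I expect no genuine analytic difficulty here — short-time existence and smoothness of $g(t)$ are assumed — so the proof is purely a structured computation, and the payoff is that (\ref{6-2}) will feed directly into the curvature evolution equations of Theorems \ref{9-4}, \ref{9-6}, \ref{9-9} via $R(X,Y)Z = D_X D_Y Z - D_Y D_X Z - D_{[X,Y]}Z$ differentiated twice in $t$.
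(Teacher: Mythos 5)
Your plan is correct and lands exactly on (\ref{6-2}), but it organizes the computation differently from the paper. The paper differentiates the Koszul identity for $2g(t)(D_XY,Z)$ twice in $t$ in one stroke, substitutes $\frac{\partial^2}{\partial t^2}g(t)=-2Ric_{g(t)}$ directly into the resulting expression, and then uses torsion-freeness (and the tensoriality of $A$) to recognize the combination of directional derivatives, bracket terms and $2Ric(D_XY,Z)$ as $-(D_XRic)(Y,Z)-(D_YRic)(X,Z)+(D_ZRic)(X,Y)$; it never needs the first-variation formula for the connection nor any commutation rule for $\partial_t$ of a covariant derivative. You instead pass through the standard Ricci-flow lemma $2g(t)(B(X,Y),Z)=(D_Xh)(Y,Z)+(D_Yh)(X,Z)-(D_Zh)(X,Y)$ with $h=\partial_t g$, and differentiate that once more using $\partial_t\big((D_Xh)(Y,Z)\big)=(D_X\dot h)(Y,Z)-h(B(X,Y),Z)-h(Y,B(X,Z))$ and $\dot h=-2Ric_{g(t)}$; this works, and the bookkeeping does close up: the corrections from the three differentiated $Dh$ terms collapse to $-2h(B(X,Y),Z)$ \emph{provided} you use $B(X,Y)=B(Y,X)$ (which follows from differentiating $D_XY-D_YX=[X,Y]$ in $t$), and combined with the $+2h(B(X,Y),Z)$ on the left and division by $2$ this yields exactly the single term $-2\frac{\partial g(t)}{\partial t}(B(X,Y),Z)$. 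Two points to tidy in a write-up: your displayed first-variation formula should not contain the extra $-2h(D_XY,Z)$ when the left side is $2g(t)(B(X,Y),Z)$ (you correctly note it cancels, so state the clean identity and drop the later reference to differentiating that piece), and the symmetry of $B$ should be stated explicitly since the cancellation of the cross terms $-h(Y,B(X,Z))+h(B(Z,X),Y)$ and $-h(X,B(Y,Z))+h(X,B(Z,Y))$ depends on it. Your route costs one extra auxiliary identity but buys a reusable intermediate (the evolution of the connection under a general metric variation), whereas the paper's one-shot computation is shorter and uses only the HGF equation at the level of second time derivatives.
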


\begin{proof} For $g(t)(D_X Y,Z)$, we differentiate it twice with
respect to $t$. This yields
  \begin{equation} \label{6-3}
    \frac{\partial^2}{\partial t^2}( g(t)(D_X Y,Z) )
         =\frac{\partial^2 g(t)}{\partial t^2}(D_X Y,Z)+2\frac{\partial g(t)}{\partial
                  t}(B(X,Y),Z)+g(t)(A(X,Y),Z).
     \end{equation}
Since the Levi-Civita connection satisfies
  \[ \begin{split}
   2g(t)(D_X Y, Z)&=X(g(t)(Y,Z))+Y(g(t)(Z,X))-Z(g(t)(X,Y))\\
      &\quad -g(t)(X,[Y,Z])+g(t)(Y,[Z,X])+g(t)(Z,[X,Y]),
   \end{split}
  \]
(\ref{6-3}) can be rewritten in form
  \begin{equation} \notag
  \begin{split}
    g(t)(A(X,Y),Z)& =X(\frac{1}{2} \frac{\partial^2 g(t)}{\partial t^2}(Y,Z))
                    +Y(\frac{1}{2} \frac{\partial^2 g(t)}{\partial t^2}(Z,X))\\
     &-Z(\frac{1}{2} \frac{\partial^2 g(t)}{\partial t^2}(X,Y))
             -\frac{1}{2} \frac{\partial^2 g(t)}{\partial t^2}(X,[Y,Z]) \\
     &+\frac{1}{2} \frac{\partial^2 g(t)}{\partial t^2}(Y,[Z,X])
        +\frac{1}{2} \frac{\partial^2 g(t)}{\partial t^2}(Z,[X,Y])\\
     & - \frac{\partial^2 g(t)}{\partial t^2}(D_XY,Z)-2\frac{\partial g(t)}{\partial t}(B(X,Y),Z).
    \end{split}
  \end{equation}
By definition of the HGF, we have
  \begin{equation} \label{6-4}
  \begin{split}
    g(t)(A(X,Y),Z)& =-X(Ric(Y,Z)) -Y(Ric(Z,X))+Z(Ric(X,Y))\\
     &+Ric(X,[Y,Z])-Ric(Y,[Z,X])-Ric(Z,[X,Y])\\
     & +2 Ric(D_XY,Z)-2\frac{\partial g(t)}{\partial t}(B(X,Y),Z).
    \end{split}
  \end{equation}
Note the $A$ is a tensor, we conclude that
   \begin{equation} \label{6-4}
  \begin{split}
    g(t)(A(X,Y),Z)& =-(D_X Ric(Y,Z) -(D_Y Ric)(Z,X)\\
    &+(D_Z Ric)(X,Y) -2\frac{\partial g(t)}{\partial t}(B(X,Y),Z),
    \end{split}
  \end{equation}
as claimed.
\end{proof}

\subsection{Proof of main result 1: Theorem \ref{9-4}}

Now we return to compute the evolution equation for the curvature
tensor. For convenience we need the second order covariant
derivative $D^2_{X,Y}Z$ defined by
    \begin{equation} \notag
     D^2_{X,Y}Z:=D_X D_Y Z-D_{D_X Y}Z,
    \end{equation}
from which we have
   \begin{equation} \notag
     R(X,Y)Z:=D_XD_Y Z-D_YD_X Z-D_{[X,Y]}Z=D^2_{X,Y}Z-D^2_{Y,X}Z.
 \end{equation}

\begin {proof}[Proof of Theorem \ref{9-4}] The second derivative of $R(X,Y)Z$ yields
    \begin{equation} \label{7-3}
    \begin{split}
    \frac{\partial^2}{\partial t^2}R(X, Y)Z
        & =\frac{\partial^2 }{\partial t^2}D_X (D_Y Z)
            +2\frac{\partial }{\partial t}D_X(\frac{\partial }{\partial t}D_Y Z)
              +D_X(\frac{\partial^2 }{\partial t^2}D_Y Z)\\
      & -\frac{\partial^2 }{\partial t^2}D_Y (D_X Z)
            -2\frac{\partial }{\partial t}D_Y(\frac{\partial }{\partial t}D_X Z)
              -D_Y(\frac{\partial^2 }{\partial t^2}D_X Z)-\frac{\partial^2 }{\partial t^2}D_{[X,Y]} Z\\
     & =A(X,D_Y Z)+2B(X,B(Y,Z))+D_X A(Y,Z)-A(Y,D_X Z)\\
      & -2B(Y,B(X,Z))-D_Y A(X,Z)-A(D_X Y-D_Y X,Z)\\
     & =(D_X A)(Y,Z)-(D_Y A)(X,Z)+2B(X,B(Y,Z))-2B(Y,B(X,Z)).
    \end{split}
     \end{equation}
This implies
     \begin{equation} \label{7-4}
    \begin{split}
    &\frac{\partial^2}{\partial t^2}R(X,Y,Z,W)\\
        & =\frac{\partial^2}{\partial t^2}\big(g(t)(-R(X,Y)Z,W \big)\\
    & = -\frac{\partial^2 g(t) }{\partial t^2}(R(X,Y)Z,W)-g(t)\big(\frac{\partial^2}{\partial t^2}R(X,Y)Z,W\big)\\
      & =2Ric_{g(t)}(R(X,Y)Z,W)-g(t)((D_X A)(Y,Z),W)+g(t)((D_Y A)(X,Z),W)\\
      & -2g(t)(B(X,B(Y,Z)),W)+2g(t)(B(Y,B(X,Z)),W).
    \end{split}
     \end{equation}
Applying Proposition \ref{6-1}, we obtain
    \begin{equation} \label{7-5}
    \begin{split}
    & g(t)((D_X A)(Y,Z),W)\\
    & =X\big(g(t)(A(Y,Z),W)\big)-g(t)(A(Y,Z),D_X W)\\
     & -g(t)\big(A(D_X Y,Z),W\big)-g(t)\big(A(Y,D_X Z),W\big)\\
     & =X\Big(-(D_Y Ric_{g(t)})(Z,W)-(D_Z Ric_{g(t)})(Y,W)+(D_W  Ric_{g(t)})(Y,Z)\\
     &  \quad -2\frac{\partial g(t)}{\partial t}(B(Y,Z),W)\Big)
       +(D_Y Ric_{g(t)})(Z,D_X W)+(D_Z Ric_{g(t)})(Y,D_X W)\\
     & -(D_{D_X W} Ric_{g(t)})(Y,Z)+2\frac{\partial g(t)}{\partial t}(B(Y,Z),D_X W)\\
     & +(D_{D_X Y} Ric_{g(t)})(Z,W)+(D_Z Ric_{g(t)})(D_X Y, W)\\
     & -(D_{W} Ric_{g(t)})(D_X Y,Z)+2\frac{\partial g(t)}{\partial t}(B(D_X Y,Z),W)\\
      &+(D_Y Ric_{g(t)})(D_X Z,W)+(D_{D_X Z} Ric_{g(t)})(Y,W)\\
         & -(D_{W} Ric_{g(t)})(Y,D_X Z)+2\frac{\partial g(t)}{\partial t}(B(Y,D_X Z),W)\\
    \end{split}
     \end{equation}
    \begin{equation} \notag
    \begin{split}
     &= -(D_XD_Y Ric_{g(t)}-D_{D_X Y}Ric_{g(t)})(Z,W)-(D_XD_Z Ric_{g(t)}-D_{D_X Z}Ric_{g(t)})(Y,W)\\
     &+(D_XD_W Ric_{g(t)}-D_{D_X W}Ric_{g(t)})(Y,Z)-2\frac{\partial g(t)}{\partial t}\big((D_X B)(Y,Z),W\big)\\
     &=-(D^2_{X,Y} Ric_{g(t)})(Z,W)-(D^2_{X,Z} Ric_{g(t)})(Y,W)\\
     & +(D^2_{X,W} Ric_{g(t)})(Y,Z)-2\frac{\partial g(t)}{\partial t}\big((D_X B)(Y,Z),W\big).
    \end{split}
     \end{equation}
Interchanging the roles of $X$ and $Y$ yields
  \begin{equation} \label{7-6}
    \begin{split}
    & g(t)((D_Y A)(X,Z),W)\\
   & =-(D^2_{Y,X} Ric_{g(t)})(Z,W)-(D^2_{Y,Z} Ric_{g(t)})(X,W)\\
    &  +(D^2_{Y,W} Ric_{g(t)})(X,Z)-2\frac{\partial g(t)}{\partial t}\big((D_Y B)(X,Z),W\big).
    \end{split}
     \end{equation}
Moreover, we have
   \begin{equation} \label{7-7}
    \begin{split}
     &(D^2_{X,Y} Ric_{g(t)})(Z,W)-(D^2_{Y,X} Ric_{g(t)})(Z,W)\\
     & =\big((D^2_{X,Y}-D^2_{Y,X})Ric_{g(t)}\big)(Z,W )\\
     & =\big( R(X,Y) Ric_{g(t)}\big)(Z,W )\\
     & =Ric_{g(t)}(R(X,Y)Z,W)+Ric_{g(t)}(R(X,Y)W,Z).
    \end{split}
     \end{equation}
Substituting (\ref{7-4}) with (\ref{7-5}), (\ref{7-6}) and
(\ref{7-7}), we get
   \begin{equation} \label{7-7'}
  \begin{split}
      &\frac{\partial^2}{\partial t^2}R(X,Y,Z,W)\\
      &=-Ric_{g(t)}(R(X,Y)Z,W)+Ric_{g(t)}(R(X,Y)W,Z)\\
     &+(D^2_{X,Z} Ric_{g(t)})(Y,W)-(D^2_{X,W} Ric_{g(t)})(Y,Z)\\
    & -(D^2_{Y,Z} Ric_{g(t)})(X,W)+(D^2_{Y,W} Ric_{g(t)})(X,Z)\\
     &+2\frac{\partial g(t)}{\partial t}\big((D_X B)(Y,Z),W\big)
     -2\frac{\partial g(t)}{\partial t}\big((D_Y B)(X,Z),W\big)\\
   & -2g(t)(B(X,B(Y,Z)),W)+2g(t)(B(Y,B(X,Z)),W),
    \end{split}
  \end{equation}
as claimed.
\end{proof}

\subsection{Proof of main result 2: Theorem \ref{9-6}}

We will show that the right-hand side in the equation (\ref{7-7'})
for the curvature tensor equals the Laplacian of the curvature
tensor, up to lower order terms. To this end, we first give the
following lemma (See \cite{Br}) which is independent of any
evolution.

\begin{lemma} \label{7-8} Let $X,Y,Z,W$ be arbitrary fields on $M$. Then
   \begin{equation}\label{7-9}
   \begin{split}
    &(D^2_{X,Z} Ric_{g(t)})(Y,W)-(D^2_{X,W}Ric_{g(t)})(Y,Z)\\
    &-(D^2_{Y,Z}Ric_{g(t)})(X,W)+(D^2_{Y,W}Ric_{g(t)})(X,Z) \\
   & =(\Delta R)(X,Y,Z,W)+Q(R)(X,Y,Z,W)-Ric_{g(t)}(X,R_{Z,W}Y)+Ric_{g(t)}(Y, R_{Z,W}X),
   \end{split}
   \end{equation}
where $Q(R):=R^2+R^\#$ is a curvature tensor satisfying the first
Bianchi identity, and given a local orthonormal basis
$\{e_1,\ldots,e_n\}$, $R^2$ and $R^\#$ are difined by (\ref{9-8}) in
Theorem \ref{9-6}.
\end{lemma}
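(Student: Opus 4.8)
The plan is to establish the identity (\ref{7-9}) purely from the second Bianchi identity and the contracted Bianchi identity, with no reference to the flow; this is exactly the commutator computation that appears in Hamilton's derivation of the evolution of curvature under Ricci flow, so I would follow Brendle's treatment in \cite{Br}. The left-hand side is a sum of second covariant derivatives of $Ric_{g(t)}$ with indices in a particular antisymmetrization pattern, and the right-hand side is $(\Delta R)(X,Y,Z,W)$ plus the algebraic curvature term $Q(R)$ plus two $Ric$-curvature contractions. The strategy is to rewrite $\Delta R$ in terms of second derivatives of $Ric$ using Bianchi, and then track the curvature terms produced by all the commutators of covariant derivatives.

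First I would recall the second Bianchi identity in the form $(D_U R)(X,Y,Z,W) + (D_X R)(Y,U,Z,W) + (D_Y R)(U,X,Z,W) = 0$ and contract it on $U$ and (say) $Z$ against a local orthonormal frame $\{e_1,\dots,e_n\}$. The contraction $\sum_i (D_{e_i} R)(X,Y,e_i,W)$ produces, via Bianchi, a difference of covariant derivatives of $Ric$: schematically $\sum_i (D_{e_i}R)(X,Y,e_i,W) = (D_X Ric)(Y,W) - (D_Y Ric)(X,W)$, using the definition $Ric(A,B) = \sum_i R(A,e_i,B,e_i)$ (sign conventions to be fixed consistently with the paper's $R(X,Y,Z,W) = -g(R(X,Y)Z,W)$). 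Differentiating this contracted Bianchi identity once more covariantly, in the $X$ and in the $Y$ slots and antisymmetrizing appropriately, yields $\sum_i (D^2_{?,e_i} R)(\cdots,e_i,\cdots)$ on one side and the four second-derivative-of-$Ric$ terms on the left-hand side of (\ref{7-9}) on the other. Then I would convert $\sum_i D^2_{e_i,e_i} R$ into $\Delta R$ by commuting the two derivatives past the frame contraction and past the remaining slots, which is where the curvature terms are generated.

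The second and central step is the bookkeeping of all commutator terms. Each time I move a covariant derivative past another, or past a frame vector in the contraction $\sum_i$, I pick up a term of the form $R * R$ or $R * Ric$. Collecting these, the quadratic-in-$R$ terms must assemble precisely into $Q(R) = R^2 + R^\#$ with $R^2$ and $R^\#$ as defined in (\ref{9-8}); the terms linear in $Ric$ and $R$ must assemble into $-Ric_{g(t)}(X,R_{Z,W}Y) + Ric_{g(t)}(Y,R_{Z,W}X)$. Here $R_{Z,W}$ denotes the curvature operator acting as a derivation, so these are the contributions coming from commuting a derivative past a $Ric$ factor and from the curvature term hidden in $D^2_{X,Y} - D^2_{Y,X} = R(X,Y)$ acting on the tensor $Ric$. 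Keeping careful track of which pairs of slots are antisymmetrized, and using the first Bianchi identity for $R$ to recombine terms, is what makes the quadratic terms collapse into the compact $Q(R)$ form; verifying that the resulting $Q(R)$ indeed satisfies the first Bianchi identity is a short algebraic check on $R^2$ and $R^\#$ separately.

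The main obstacle will be precisely this sign- and index-tracking in the commutator step: there are many terms, the antisymmetrization pattern on $(X,Y)$ and on $(Z,W)$ must be respected throughout, and the paper's curvature sign convention ($R_{ijkl}$ versus $R(X,Y,Z,W) = -g(R(X,Y)Z,W)$) must be used uniformly or the $Q(R)$ term will come out with the wrong sign. I would mitigate this by doing the computation in the orthonormal frame $\{e_i\}$ with all indices down, reducing everything to the flat-index Hamilton-type identity $\Delta R_{ijkl} = \nabla_i\nabla_k Ric_{jl} - \nabla_i\nabla_l Ric_{jk} - \nabla_j\nabla_k Ric_{il} + \nabla_j\nabla_l Ric_{ik} - (Q(R))_{ijkl} + (\text{the } Ric*R \text{ terms})$, then re-expressing this coordinate identity in the coordinate-free notation of the statement. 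Since (\ref{7-9}) involves no time derivatives at all, the whole lemma is a standard Riemannian-geometry identity and the proof is complete once the commutators are accounted for; it is then fed into (\ref{7-7'}) in the next subsection to yield Theorem \ref{9-6}.
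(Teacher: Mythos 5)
Your plan is correct and follows essentially the same route as the paper: both trade the traced second derivatives of $Ric_{g(t)}$ for mixed second derivatives of $R$ via the (contracted) second Bianchi identity, commute covariant derivatives to generate the $Q(R)=R^2+R^\#$ and $Ric\ast R$ terms, and identify the remaining trace $\sum_k D^2_{e_k,e_k}R$ with $\Delta R$, exactly as in Brendle's treatment which the paper itself cites. The only difference is cosmetic (you contract the Bianchi identity before differentiating, the paper differentiates first and then applies Bianchi under the derivatives), so no gap remains beyond the sign bookkeeping you already flag.
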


\begin{proof}
For the orthonormal basis $\{e_1,\ldots,e_n\}$, it is easy to show
that
  \begin{equation} \label{7-11}
  \begin{split}(D^2_{X,Z}
  Ric_{g(t)})(Y,W)=\sum\limits_{k=1}^{n}(D^2_{X,Z}R)(e_k,Y,e_k,W),\\
   (D^2_{X,W}Ric_{g(t)})(Y,Z)=\sum\limits_{k=1}^{n}(D^2_{X,W}R)(e_k,Y,e_k,Z).
   \end{split}
   \end{equation}
Using the second Bianchi identity, by a direct computation we obtain
 \begin{equation} \label{7-10}
  (D^2_{X,Z}R)(e_k,Y,e_k,W)-(D^2_{X,W}R)(e_k,Y,e_k,Z)=(D^2_{X,e_k}R)(e_k,Y,Z,W).
  \end{equation}
Thus we have
    \begin{equation}\label{7-10-1}
    (D^2_{X,Z}
    Ric_{g(t)})(Y,W)-(D^2_{X,W}Ric_{g(t)})(Y,Z)=\sum\limits_{k=1}^{n}(D^2_{X,e_k}R)(e_k,Y,Z,W),
    \end{equation}
and
     \begin{equation} \label{7-10-2}
    (D^2_{Y,Z} Ric_{g(t)})(X,W)-(D^2_{Y,W}Ric_{g(t)})(X,Z)=\sum\limits_{k=1}^{n}(D^2_{Y,e_k}R)(e_k,X,Z,W).
    \end{equation}
Therefore, (\ref{7-10-1}) and (\ref{7-10-2}) yield
   \begin{equation}\label{7-12}
   \begin{split}
    &I:=(D^2_{X,Z} Ric_{g(t)})(Y,W)-(D^2_{X,W}Ric_{g(t)})(Y,Z)\\
    &-(D^2_{Y,Z}Ric_{g(t)})(X,W)+(D^2_{Y,W}Ric_{g(t)})(X,Z) \\
   & =\sum\limits_{k=1}^{n}\Big(D^2_{X,e_k}R)(e_k,Y,Z,W)-(D^2_{Y,e_k}R)(e_k,X,Z,W)\Big).
   \end{split}
   \end{equation}
Now we consider to put $Q(R)$ into (\ref{7-12}). Note that
    \begin{equation}\label{7-13}
   \begin{split}
       &\sum\limits_{k=1}^{n}(D^2_{X,e_k} R-D^2_{e_k,X}R)(e_k,Y,Z,W)\\
    & =\sum\limits_{k,l=1}^{n}\big(R(X,e_k,e_k,e_l)R(e_l,Y,Z,W)+R(X,e_k,Y,e_l)R(e_k,e_l,Z,W)\\
      & \quad     +R(X,e_k,Z,e_l)R(e_k,Y,e_l,W)+R(X,e_k,W,e_l)R(e_k,Y,Z,e_l)\big)
   \end{split}
   \end{equation}
and
\begin{equation}\label{7-14}
   \begin{split}
       &\sum\limits_{k=1}^{n}(D^2_{Y,e_k} R-D^2_{e_k,Y}R)(e_k,X,Z,W)\\
    & =\sum\limits_{k,l=1}^{n}\big(R(Y,e_k,e_k,e_l)R(e_l,X,Z,W)+R(Y,e_k,X,e_l)R(e_k,e_l,Z,W)\\
    &\quad
    +R(Y,e_k,Z,e_l)R(e_k,X,e_l,W)+R(Y,e_k,W,e_l)R(e_k,X,Z,e_l)\big),
   \end{split}
   \end{equation}
we have
    \begin{equation}\label{7-15}
   \begin{split}
   &\sum\limits_{k=1}^{n}\Big((D^2_{X,e_k} R-D^2_{e_k,X}R)(e_k,Y,Z,W)-(D^2_{Y,e_k} R-D^2_{e_k,Y}R)(e_k,X,Z,W)\Big)\\
    & =\sum\limits_{l=1}\big(Ric(X,e_l)R(e_l,Y,Z,W)-Ric(Y,e_l)R(e_l,X,Z,W)\big)\\
     &+\sum\limits_{k,l=1}^{n}\big(R(X,e_k,Y,e_l)-R(Y,e_k,X,e_l)\big)R(e_k,e_l,Z,W)\\
       & +2\sum\limits_{k,l=1}^{n}\big(R(X,e_k,Z,e_l)R(e_k,Y,e_l,W)-R(X,e_k,W,e_l)R(Y,e_k,Z,e_l)\big)\\
   \end{split}
   \end{equation}
By definitions of $R^2$ and $R^\#$, together with the first Bianchi
identity
    $$R(X,e_k,Y,e_l)-R(Y,e_k,X,e_l)=R(X,Y,e_k,e_l),$$
(\ref{7-15}) can be reduced as
     \begin{equation}\label{7-16}
   \begin{split}
   &\sum\limits_{k=1}^{n}\Big((D^2_{X,e_k} R-D^2_{e_k,X}R)(e_k,Y,Z,W)-(D^2_{Y,e_k} R-D^2_{e_k,Y}R)(e_k,X,Z,W)\Big)\\
    & =-Ric(X,R_{Z,W}Y)+Ric(Y,R_{Z,W}X)+(R^2+R^\#)(X,Y,Z,W).\\
    \end{split}
   \end{equation}
Hence, from (\ref{7-12}) and (\ref{7-16}) we have
  \begin{equation}\label{7-17}
   \begin{split}
   &I=Q(R)(X,Y,Z,W)+ \sum\limits_{k=1}^{n}\Big(D^2_{e_k,X} R)(e_k,Y,Z,W)-(D^2_{e_k,Y}R)(e_k,X,Z,W)\Big)\\
   &-Ric(X,R_{Z,W}Y)+Ric(Y,R_{Z,W}X).
    \end{split}
   \end{equation}
Next we consider to put $\Delta R$ into (\ref{7-16}). Similar to
(\ref{7-10}), we have
    $$ \sum\limits_{k=1}^{n}(D^2_{e_k,e_k} R)(X,Y,Z,W)
    =\sum\limits_{k=1}^{n}\big((D^2_{e_k,X} R)(e_k,Y,Z,W)-(D^2_{e_k,Y}R)(e_k,X,Z,W)\big).$$
Moreover, we know
    $$ \Delta R=\sum\limits_{k=1}^{n} D^2_{e_k,e_k} R.$$
Putting these facts together, (\ref{7-17}) arrives at the desired
equation.
\end{proof}

\begin{proof}[Proof of Theorem \ref{9-6}] By Theorem
\ref{9-4} and Lemma \ref {7-8}, we obtain the following
wave-character equation for curvature tensor
\begin{equation}
\label{7-18}
   \begin{split}
   \frac{\partial^2}{\partial t^2} R(X,Y,Z,W)
   & =(\Delta R)(X,Y,Z,W)+Q(R)(X,Y,Z,W)\\
   &  \quad -Ric_{g(t)}(X,R_{Z,W}Y)+Ric_{g(t)}(Y, R_{Z,W}X)\\
         &+Ric_{g(t)}(Z,R_{X,Y}W)-Ric_{g(t)}(W, R_{X,Y}Z)\\
                  &+2\frac{\partial g(t)}{\partial t}\big((D_X B)(Y,Z),W\big)
                    -2\frac{\partial g(t)}{\partial t}\big((D_Y B)(X,Z),W\big)\\
                  &-2g(t)\big(B(X,B(Y,Z)),W \big)+2g(t)\big( B(Y,B(X,Z)),W \big).
   \end{split}
   \end{equation}
\end{proof}

\subsection{Proof of main result 3: Theorem \ref{9-9}}

Notice that the hyperbolic geometric flow is an evolution equation
on the metric $g_{ij}(t)$. The evolution for the metric is not only
implies a nonlinear wave equation for the Riemannian curvature
tensor, but also for the Ricci curvature tensor and the scalar
curvature. This result (i.e.Theorems \ref{9-9}) is stated in the
introduction. Now we give its proof.

\begin{proof}[Proof of Theorem \ref{9-9}]
Similar to (\ref{7-11}), we have
\begin{equation} \label{8-3}
   \begin{split}
 (\Delta Ric_{g(t)})(X,Y)&=(\sum\limits_{j=1}^{n}(D^2_{e_j,e_j}Ric))(X,Y)\\
                          & =\sum\limits_{i,j=1}^{n}(D^2_{e_j,e_j}R)(X,e_i,Y,e_i)\\
                          & =\sum\limits_{i=1}^{n}(\Delta  R)(X,e_i,Y,e_i).
   \end{split}
   \end{equation}
By definition of $Q(R)$, we have
 \begin{equation} \label{8-4}
   \begin{split}
   \sum\limits_{i=1}^{n} Q(R)(X,e_i,Y,e_i)
   &=\sum\limits_{i,j,k=1}\Big( R(X,e_i,e_j,e_k)R(Y,e_i,e_j,e_k)\\
                          & +2R(X,e_j,Y,e_k)R(e_i,e_j,e_i,e_k)\\
                          &-2R(e_i,e_j,Y,e_k)R(X,e_j,e_i,e_k)\Big).
  \end{split}
   \end{equation}
Using the first Bianchi identity, we obtain
   \begin{equation} \notag
   \begin{split}
   &-2\sum\limits_{i,j,k=1}R(e_i,e_j,Y,e_k)R(X,e_j,e_i,e_k)\\
    &=-\sum\limits_{i,j,k=1}R(X,e_j,e_i,e_k)\big(R(Y,e_k,e_i,e_j,)-R(Y,e_i,e_k,e_j)\big)\\
    &=-\sum\limits_{i,j,k=1}R(X,e_j,e_i,e_k)R(Y,e_j,e_i,e_k)\\
     &=-\sum\limits_{i,j,k=1}R(X,e_i,e_j,e_k)R(Y,e_i,e_j,e_k).
  \end{split}
   \end{equation}
Hence (\ref{8-4}) can be reduced as
    \begin{equation} \label{8-5}
    \begin{split}
   \sum\limits_{i=1}^{n} Q(R)(X,e_i,Y,e_i)
   &=2 \sum\limits_{i,j,k=1}R(X,e_j,Y,e_k)R(e_i,e_j,e_i,e_k)\\
   &=2 \sum\limits_{j,k=1}R(X,e_j,Y,e_k)Ric_{g(t)}(e_j,e_k).
    \end{split}
    \end{equation}
Moreover, we have
  \begin{equation} \label{8-6}
   \begin{split}
  & \sum\limits_{i=1}^{n}[ -Ric_{g(t)}(X,R_{Y,e_i}e_i)+Ric_{g(t)}(e_i, R_{Y,e_i}X)\\
         &+Ric_{g(t)}(Y,R_{X,e_i}e_i)-Ric_{g(t)}(e_i, R_{X,e_i}Y)]\\
  &=\sum\limits_{i,j=1}^{n}[-R(Y,e_i,e_i,e_j)Ric_{g(t)}(X,e_j)+R(Y,e_i,X,e_j)Ric_{g(t)}(e_i,e_j)\\
  &  +R(X,e_i,e_i,e_j)Ric_{g(t)}(Y,e_j)-R(X,e_i,Y,e_j)Ric_{g(t)}(e_i,e_j)]\\
   &=   \sum\limits_{i,j=1}^{n}[Ric_{g(t)}(Y,e_j)Ric_{g(t)}(X,e_j)-Ric_{g(t)}(X,e_j)Ric_{g(t)}(Y,e_j)]\\
   &=0.
   \end{split}
   \end{equation}
 Using Theorem \ref{9-6}, or by (\ref{7-18}), together with (\ref{8-3}), (\ref{8-5}) and
(\ref{8-6}), we get
  \begin{equation} \label{8-7}
   \begin{split}
  &\frac{\partial^2}{\partial t^2} Ric_{g(t)}(X,Y)
          = \sum\limits_{i=1}^{n}\frac{\partial^2}{\partial t^2}R(X,e_i,Y,e_i)\\
  &=(\Delta Ric_{g(t)})(X,Y)+2\sum\limits_{j,k=1}R(X,e_j,Y,e_k)Ric_{g(t)}(e_j,e_k)\\
  &+2\sum\limits_{i=1}^{n}\big[\frac{\partial g(t)}{\partial t}\big((D_X B)(e_i,Y),e_i)\big)
                    -\frac{\partial g(t)}{\partial t}\big((D_{e_i} B)(X,Y),e_i\big)\big]\\
   &-2\sum\limits_{i=1}^{n}[ g(t)\big(B(X,B(e_i,Y)),e_i \big)
         -g(t)\big( B(e_i,B(X,Y)),e_i \big)],
   \end{split}
   \end{equation}
as the first claimed equality (\ref{9-10}).

As for the second assertion (\ref{9-11}), note that
  \begin{equation} \notag
   \begin{split}
   &\frac{\partial^2}{\partial t^2} Scal_{g(t)}
          = \sum\limits_{i=1}^{n}\frac{\partial^2}{\partial t^2}Ric_{g(t)}(e_i,e_i),\\
  &\sum\limits_{i=1}^{n}(\Delta Ric_{g(t)})(e_i,e_i)=\Delta scal_{g(t)},\\
  &\sum\limits_{i,j,k=1}R(e_i,e_j,e_i,e_k)Ric_{g(t)}(e_j,e_k)
   =\sum\limits_{j,k=1}Ric_{g(t)}(e_j,e_k)Ric_{g(t)}(e_j,e_k)=|Ric_{g(t)}|^2,\\
   \end{split}
   \end{equation}
it follows from (\ref{8-7}) that
   \begin{equation} \notag
   \begin{split}
   \frac{\partial^2}{\partial t^2} Scal_{g(t)}
          &= \Delta Scal_{g(t)}+2|Ric_{g(t)}|^2\\
          &\quad +2\sum\limits_{i,j=1}^{n}[\frac{\partial g(t)}{\partial t}\big((D_{e_j} B)(e_i,e_j),e_i)\big)
          -\frac{\partial g(t)}{\partial t}\big((D_{e_i}
          B)(e_j,e_j),e_i\big)]\\
         & -2\sum\limits_{i,j=1}^{n}[ g(t)\big(B(e_j,B(e_i,e_j)),e_i \big)
         -g(t)\big( B(e_i,B(e_j,e_j)),e_i \big)],
   \end{split}
   \end{equation}
as claimed.
\end{proof}

\section{Curvature blow-up at finite-time singularities }
In this section, we consider a maximal solution to the hyperbolic
geometric flow which is defined on a finite interval $[0,T)$.
Similar to the result in Hamilton's paper (\cite{Ha}, Theorem 14.1),
we show that such a solution must have unbounded curvature. The
proof is due to C. Udriste's approach (See \cite{Ud}, Theorem 6.1).

 \begin{theorem} \label{4.1} Let $M$ be a compact manifold, and let $g(t)$, $t\in
 [0,T)$, be a maximal solution to the HGF (\ref{1-1}) on $M$. Moreover, suppose
 that $T< \infty$. Then
     \begin{equation} \notag
    \limsup\limits_{t\to T} (\sup\limits_{M} |Ric_{g(t)}|)=\infty.
   \end{equation}
 \end{theorem}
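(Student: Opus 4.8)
The plan is to argue by contradiction: assume $T < \infty$ is the maximal existence time but that $\sup_{M \times [0,T)} |Ric_{g(t)}| \le C < \infty$, and then show that $g(t)$ extends past $T$, contradicting maximality. The starting observation is that the hyperbolic geometric flow $\frac{\partial^2}{\partial t^2} g = -2 Ric_{g(t)}$ is second order in $t$, so the natural quantity to control is not only $g(t)$ itself but also $h(t) := \frac{\partial}{\partial t} g(t)$. First I would integrate the flow equation twice in $t$: writing $h(t) = h(0) - 2\int_0^t Ric_{g(s)}\, ds$ and $g(t) = g(0) + \int_0^t h(s)\, ds$, the assumed bound $|Ric_{g(s)}| \le C$ gives, together with Gronwall-type estimates, that $g(t)$ stays uniformly equivalent to $g(0)$ on $[0,T)$ — i.e. there is a constant $C_1$ with $C_1^{-1} g(0) \le g(t) \le C_1 g(0)$ — and that $h(t)$ stays uniformly bounded with respect to $g(0)$. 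The point is that $|Ric|$ is measured in the evolving metric, but once the metrics are mutually equivalent this is the same as a bound in the fixed background metric.

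Next I would bootstrap to bounds on spatial derivatives of all orders, following the Bernstein–Bando–Shi-type strategy adapted to the hyperbolic setting. Here is exactly where Theorem \ref{9-6} (equivalently equation (\ref{7-18})) enters: the curvature tensor satisfies a nonlinear wave equation $\frac{\partial^2}{\partial t^2} R = \Delta R + Q(R) + (\text{terms in } Ric, R) + (\text{terms in } B, D B)$, where $B(X,Y) = \frac{\partial}{\partial t}(D_X Y)$. By Proposition \ref{6-1}, $B$ (and hence $A$) is controlled by one spatial derivative of $Ric$ together with $h = \frac{\partial g}{\partial t}$, which we have already bounded. One then sets up an energy functional of the schematic form $E_k(t) = \int_M \big( |\partial_t D^k R|^2 + |D^{k+1} R|^2 + |D^k R|^2 \big)\, dV_{g(t)}$, differentiates in $t$, integrates by parts using the wave equation, and estimates the nonlinear terms using the already-established $C^0$ (and inductively $C^{k-1}$) bounds to obtain a differential inequality $E_k'(t) \le c\, E_k(t) + c$. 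Gronwall then yields $\sup_{[0,T)} E_k(t) < \infty$ for every $k$, and Sobolev embedding upgrades this to uniform $C^\infty$ bounds on $g(t)$ and $\frac{\partial}{\partial t} g(t)$ on $M \times [0,T)$.

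Finally, with uniform smooth bounds on $(g(t), \partial_t g(t))$ and the metrics uniformly equivalent, $(g(T), \partial_t g(T))$ exists as a smooth limit (the time derivatives of $g$ up to any order extend continuously to $t = T$, using the flow equation to convert $\partial_t^2$ into spatial data). Taking this as new initial data and invoking the short-time existence theorem for the hyperbolic geometric flow (guaranteed by its hyperbolic nature, as recalled in the introduction) produces a solution on $[T, T + \varepsilon)$, which glues with $g(t)$ on $[0,T)$ to extend the solution beyond $T$ — contradicting that $T$ was the maximal time. Hence $\limsup_{t \to T}\big(\sup_M |Ric_{g(t)}|\big) = \infty$.

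The main obstacle I anticipate is the derivative-estimate step: unlike the parabolic Ricci flow, here one does not get instantaneous smoothing, so the higher-order bounds must be propagated from the initial data through the energy method rather than gained for free, and one must carefully track that all the $B$- and $DB$-terms appearing in (\ref{7-18}) — which involve $\partial_t$ of connection coefficients — are genuinely absorbed by the bound on $h = \partial_t g$ and its spatial derivatives, which in turn requires differentiating Proposition \ref{6-1} and controlling $\partial_t h = -2 Ric$. Keeping the hierarchy of estimates consistent (each $E_k$ bound depending only on lower-order quantities already controlled) is the delicate bookkeeping at the heart of the argument.
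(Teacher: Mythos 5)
Your overall strategy is the same as the paper's: argue by contradiction, integrate the flow equation twice in $t$ to get
$\partial_t g(t)=\partial_t g(0)-2\int_0^t Ric\,ds$ and the corresponding formula for $g(t)$, use the Ricci bound to conclude that $g$, $\partial_t g$ (and $\partial_t^2 g$) converge as $t\to T$, and then restart the flow from $(g(T),\partial_t g(T))$ by short-time existence to contradict maximality. The paper's proof is in fact \emph{only} this: a Lipschitz-in-$t$ estimate plus the Cauchy criterion, followed by the assertion that the solution extends smoothly to $[0,T]$; it makes no attempt at higher-order estimates, and in particular never uses the curvature evolution equations of Section 3.

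Where you diverge is the Bernstein--Bando--Shi-type energy bootstrap, and as written that step has a genuine gap. To close the Gronwall inequality $E_k'\le c\,E_k+c$ with a constant $c$ uniform on $[0,T)$ you need sup-norm control of the coefficients of the nonlinearities in (\ref{7-18}): the term $Q(R)=R^2+R^{\#}$ is quadratic in the \emph{full} Riemann tensor, and the $B$, $DB$ terms are, via Proposition \ref{6-1}, controlled only by first covariant derivatives of $Ric$ together with $\partial_t g$. Neither $\lvert R\rvert_{L^\infty}$ nor $\lvert D\,Ric\rvert_{L^\infty}$ is among the ``already-established $C^0$ bounds'' — the hypothesis bounds only $\lvert Ric\rvert$, and your first step bounds only $g$ and $h=\partial_t g$ — so the base case of your induction does not close; at best one gets an inequality like $E_k'\lesssim(1+\lVert R\rVert_{L^\infty})E_k$ with an uncontrolled factor, which (after Sobolev) yields only short-time, not up-to-$T$, control. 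Unlike parabolic Ricci flow, where ``bounded Ricci implies extension'' is a genuinely hard theorem, here one would either have to strengthen the contradiction hypothesis to a bound on the full curvature (proving a weaker statement than the theorem) or replace the curvature-energy bootstrap by a continuation criterion for the gauge-fixed quasilinear wave system in terms of Sobolev norms of $(g,\partial_t g)$ — a different and substantial argument. The paper avoids this issue only by not addressing spatial regularity at $t=T$ at all, so your plan is more demanding than the published proof but, as it stands, the derivative-estimate step you single out as the main obstacle is exactly where it breaks down.
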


\begin{proof} We argue by contradiction. Assuming the Ricci tensor of $g(t)$
is uniformly bounded for all $t\in [0,T)$, that is, there is a
positive constant m such that $|Ric_{g(t)}|\leq m,\, t\in [0,T)$. So
the solution $g(t)$ can be extended to a larger time interval
$[0,T+\varepsilon)$, where $\varepsilon$ is a arbitrary small
number. Indeed, by (\ref{1-1}), we have the relations
   \begin{equation} \notag
   \begin{split}
   & \frac{\partial g}{\partial t}(t)=\frac{\partial g}{\partial t}(0)-2 \int_0^t Ric(s,x) ds, \\
   & g(t)=g(0)+t\frac{\partial g}{\partial t}(0)-2 \int_0^t\int_0^u
   Ric(s,x) dsdu, \;t\in[0,T),
   \end{split}
   \end{equation}
which imply
   \begin{equation} \notag
   g(t_1)-g(t_2)=(t_1-t_2)\frac{\partial g}{\partial t}(0)-2 \int_{t_1}^{t_2}\int_0^u
   Ric(s,x) dsdu, \;t_1,t_2\in[0,T).
   \end{equation}
Hence we obtain
   \begin{equation} \notag
   |g(t_1)-g(t_2)|=\big(\mid \frac{\partial g}{\partial t}(0)\mid+2mT \big)|t_1-t_2|.
   \end{equation}
The Cauchy Criterion shows that $\lim_{t\to T} g(t)$ exists, while
$\lim_{t\to T }\frac{\partial g}{\partial t}(t)$ and $\lim_{t\to T
}\frac{\partial^2 g}{\partial t^2}(t)$ exist since $\lim_{t\to T
}Ric(t,x)$ exists (See Definition \ref{1-0}). Consequently, $g(t)$,
$t\in [0,T]$ is the solution to a HGF.

   In this case, $g(t)$ may be extended from being a smooth solution
   on $[0,T)$ to a smooth solution on $[0,T]$. Then we take $g(T)$,
$\frac{\partial g}{\partial t}(T)$ to be an initial metric in a
short-time existence theorem in order to extend the solution to a
HGF for $t\in [0,T+\varepsilon)$. This contradicts the assumption
that $[0,T)$ is a maximal time interval. Therefore, we complete the
proof.
\end{proof}

\section{Expected problems }
  To conclude this paper, we present some expected problems. Now we have the global forms of
evolution equations (\ref{9-5}), (\ref{9-7}),  (\ref{9-10}) and
(\ref{9-11}) along the HGF (\ref{1-1}), so in the future we are
interested in the following
problems:\\\\
 1. For fixed $(p,t)\in M\times [0,T]$, we denote by $K_{\max(p,t)}$ / $K_{\min}(p,t)$ the maximum /minimum
 sectional curvature of $g(t)$ at the point $p$. Moreover, for
 abbreviation, we define
      $$K_{\max}(t)=\sup\limits_{p\in M} K_{\max}(p,t),\; K_{\min}(t)=\inf\limits_{p\in M} K_{\min}(p,t).$$
 Let $\{t_k\}$ be a sequence of times such that $\lim\limits_{k\to \infty} t_k =T$ and
 $K_{\max}(t_k)\geq \frac{1}{2}\sup\limits_{t\in[0,t_k]}K_{\max}(t)$ for all $k$. Then
 by Theorem \ref{4.1}, does the following relation hold
   \begin{equation} \notag
   \begin{split}
   &  \limsup\limits_{k\to \infty} \frac{K_{\min}(t_k)}{  K_{\max}(t_k)  }<1,\;\; or\\
   & \lim\limits_{t\to T} \frac{K_{\min}(t)}{ K_{\max}(t) }=1?
   \end{split}
   \end{equation}
\\\\
2. ({\em Preserved curvature conditions by the HGF}) We know
  that if we want to study the global properties of HGF, then it is
  important to find curvature conditions that are preserved under
  the evolution. How to develop such techniques? For instance, suppose $M$ is
  compact manifold, and let $g(t),\ t\in [0,T)$, be a solution to
  HGF on $M$, and consider a appropriate ODE$(\ast)$
       $\frac{d^2}{dt^2}R(t)=Q(R(t))+(certain\ term)$.
Can we claim that nonnegative isometric curvature (
  \cite{Br}) is preserved by the ODE$(\ast)$?\\\\

{\bf Acknowledgments.} I am very grateful to Professor Kefeng Liu ,
as my advisor, who has encouraged me to learn the newest
developments from leading geometric experts and has been offering me
tremendous information and constructive suggestions. I would like to
thank Professor Hong-Wei Xu for helpful discussions on the course
{\em Ricci Flow and the Sphere Theorem} by S.Brendle \cite{Br}.

\end{document}